\DeclareMathAlphabet{\mathpzc}{OT1}{pzc}{m}{it}
\begin{document}

\theoremstyle{plain}

\newtheorem{theorem}{Theorem}[section]
\newtheorem{lemma}[theorem]{Lemma}
\newtheorem{proposition}[theorem]{Proposition}
\newtheorem{corollary}[theorem]{Corollary}
\newtheorem{definition}[theorem]{Definition}
\newtheorem{Ass}[theorem]{Assumption}
\theoremstyle{definition}
\newtheorem{remark}[theorem]{Remark}
\newtheorem{SA}[theorem]{Standing Assumption}
\newtheorem{example}[theorem]{Example}
\newtheorem*{discussion}{Discussion}

\renewcommand{\chapterautorefname}{Chapter} 
\renewcommand{\sectionautorefname}{Section} 

\crefname{lemma}{lemma}{lemmas}
\Crefname{lemma}{Lemma}{Lemmata}
\crefname{corollary}{corollary}{corollaries}
\Crefname{corollary}{Corollary}{Corollaries}

\newcommand{\Y}{\mathsf{Y}}
\newcommand{\s}{\mathfrak{s}}
\newcommand{\m}{\mathfrak{m}}
\def\stackrelboth#1#2#3{\mathrel{\mathop{#2}\limits^{#1}_{#3}}}

\newcommand\myrot[1]{\mathrel{\rotatebox[origin=c]{#1}{$\Longrightarrow$}}}
\newcommand\NEarrow{\myrot{45}}
\newcommand\SEarrow{\myrot{-45}}

\renewcommand{\theequation}{\thesection.\arabic{equation}}
\numberwithin{equation}{section}

\newcommand{\C}{\mathbb{C}}
\newcommand{\on}{\operatorname}
\newcommand{\1}{\mathds{1}}
\renewcommand{\epsilon}{\varepsilon}
\newcommand{\X}{\mathsf{X}}
\newcommand{\Z}{\mathsf{Z}}
\newcommand{\E}{{\mathds{E}}}
\renewcommand{\P}{\mathds{P}}
\renewcommand{\t}{\tau} 
\newcommand{\B}{\mathsf{X}}
\newcommand{\Q}{\mathds{W}}
\newcommand{\T}{{\tau_D}} 
\renewcommand{\S}{{\tau_D}} 
\newcommand{\M}{\mathbf{M}}
\renewcommand{\i}{\zeta}
\newcommand{\z}{\gamma} 
\renewcommand{\u}{\mathfrak{u}}
\renewcommand{\v}{\mathfrak{v}}
\newcommand{\I}{o} 
\renewcommand{\emptyset}{\varnothing}
\newcommand{\g}{\mathfrak{g}}

\title[Feller--Dynkin and Martingale Property of Diffusions]{On the Feller--Dynkin and the Martingale Property of One-Dimensional Diffusions} 
\author[D. Criens]{David Criens}
\address{D. Criens - Albert-Ludwigs University of Freiburg, Ernst-Zermelo-Str. 1, 79104 Freiburg, Germany}
\email{david.criens@stochastik.uni-freiburg.de}

\keywords{Diffusion, Markov Process, Martingale, Feller Process, Cauchy Problem, Speed Measure, Scale Function, Irregular Points\vspace{1ex}}

\subjclass[2010]{60J60, 60G44, 60J35, 60H10}

\thanks{Financial support from the DFG project No. SCHM 2160/15-1 is gratefully acknowledged. \\}

\date{\today}
\maketitle

\frenchspacing
\pagestyle{myheadings}

\begin{abstract}
We show that a one-dimensional regular continuous Markov process \(\X\) with scale function \(\s\) is a Feller--Dynkin process precisely if the space transformed process \(\s (\X)\) is a martingale when stopped at the boundaries of its state space.
As a consequence, the Feller--Dynkin and the martingale property are equivalent for regular diffusions on natural scale with open state space. 
By means of a counterexample, we also show that this equivalence fails for multi-dimensional diffusions.
Moreover, for It\^o diffusions we discuss relations to Cauchy problems. 
\end{abstract}

\section{The Feller--Dynkin and the Martingale Property of Diffusions}
\subsection{The Setting}
Let  \(J \subset \mathbb{R}\) be a non-empty closed, open or half open possibly infinite interval. 
We denote the interior of \(J\) by \(J^\circ,\) the closure of \(J\) in \([- \infty, \infty]\) by \(\on{cl}(J)\) and its boundary \(\on{cl}(J) \backslash J^\circ\) by \(\partial J\).
Using the classical sextuple notation of Blumenthal and Getoor, let \[\M \triangleq (\Omega, \mathcal{F}, \mathcal{F}_t \colon t \geq 0, \X_t \colon t \geq 0, \theta_t \colon t \geq 0, \P_x \colon x \in J)\] be a (path-)continuous (temporally homogeneous) conservative strong Markov process (called \emph{diffusion} in the following) with state space \((J, \mathcal{B}(J))\). 
Throughout the paper we assume that \(\M\) is \emph{regular}, i.e. \(\P_x(\t_y < \infty) > 0\) for every \(x \in J^\circ\) and \(y \in J\),
where 
\[
\t_u \triangleq \inf(t \in \mathbb{R}_+ \colon \X_t = u), \quad u \in J.
\]
As \(\M\) is a (strong) Markov process, we can define a semigroup \((T_t)_{t \geq 0}\) via
\begin{align} \label{eq: semigroup} 
	T_t f (x) \triangleq \E_x \big[ f(\X_t) \big], \quad (t, f, x) \in \mathbb{R}_+\times C_b(J) \times J.
\end{align}
It is well-known (\cite[Theorem 16.23]{breiman1968probability} or \cite[Proposition V.50.1]{RW2}) that any regular diffusion is a \emph{Feller process} in the sense that \(T_t (C_b(J)) \subset C_b(J)\). 

Next, we recall the important concepts of \emph{scale} and \emph{speed}. There exists a continuous, strictly increasing function \(\s \colon J \to \mathbb{R}\), which is unique up to increasing affine transformations, such that for any interval \(I = (a, b)\) with \([a, b] \subset J\) we have
\[
\P_x(\t_b < \t_a) = \frac{\s(x) - \s(a)}{\s(b) - \s(a)}, \quad x \in I, 
\]
see \cite[Theorem 16.27]{breiman1968probability}. A function like \(\s\) is called a \emph{scale function}. Moreover, the diffusion \(\M\) is said to be on \emph{natural scale} in case \(\on{Id}\) is a scale function. Any diffusion can be brought to natural scale via space transformation. More precisely, \(\s(\X)\) is on natural scale.
For a bounded interval \(I = (a, b)\) with \([a, b] \subset J\) we set
\begin{align} \label{eq: def G}
	G_I (x, y) \triangleq \begin{cases}
		\displaystyle\frac{2\hspace{0.03cm} (\s(x) \wedge \s(y) - \s(a))(\s(b) - \s(x) \vee \s(y))}{\s(b) - \s(a)},& x, y \in I, \\
		0,&\text{otherwise}.
	\end{cases}
\end{align}
There exists a unique Radon measure \(\m\) on \((J^\circ, \mathcal{B}(J^\circ))\) such that for any bounded interval \(I = (a, b)\) with \([a, b] \subset J\) we have 
\[
\E_x \big[ \t_a \wedge \t_b\big] = \int G_I(x, y) \m(dy), \quad x \in I,
\]
see \cite[Theorem VII.3.6]{RY}. The measure \(\m\) is called the \emph{speed measure} of \(\M\). Scale and speed determine the potential operator of the diffusion killed when exiting a bounded interval. More precisely, for any bounded interval \(I = (a, b)\) with \([a, b] \subset J\) and any Borel function \(f \colon J \to \mathbb{R}_+\) we have 
\[
\E_x \Big[ \int_0^{\t_a \wedge \t_b} f(\X_s) ds \Big] = \int G_I(x, y) f(y) \m(dy), \quad x \in I, 
\]
see \cite[Corollary VII.3.8]{RY}.
We take a reference point \(c \in J^\circ\) and define for \(x \in J^\circ\)
\begin{equation}\label{eq: u func}
	\begin{split}
		\u (x) &\triangleq \begin{cases} \displaystyle \int_{(c, x]} \m((c, z]) \s(dz),& \text{for } x \geq c,\\\vspace{-
				0.4cm}\\
			\displaystyle \int_{(x, c]} \m((z, c]) \s (dz),&\text{for } x \leq c,\end{cases}  
		\\
		\v (x) &\triangleq \begin{cases} \displaystyle \int_{(c, x]} (\s(y) - \s(c)) \m(dy),&\text{for } x \geq c,\\ \vspace{-0.4cm}
			\\
			\displaystyle\int_{(x, c]} (\s(c) - \s(y)) \m(dy),&\text{for } x \leq c.\end{cases}
	\end{split}
\end{equation}
Moreover, for \(b\in \partial J\) we write \(\u (b) \triangleq \lim_{x \to b} \u(x)\) and \(\v(b) \triangleq \lim_{x \to b} \v(x)\), where the limits are meant to be monotone. 
A boundary point \(b\in \partial J\) is called 
\begin{align*}
	\emph{regular}&\quad\text{ if } \u(b) < \infty \text{ and } \v(b) < \infty,\\  
	\emph{exit}&\quad\text{ if } \u(b) < \infty \text{ and } \v(b) = \infty,\\  
	\emph{entrance}&\quad\text{ if } \u(b) = \infty \text{ and } \v(b) < \infty,\\  
	\emph{natural}&\quad\text{ if } \u(b) = \infty \text{ and } \v(b) = \infty.
\end{align*}
These definitions are independent of the choice of the reference point \(c \in J^\circ\).
Regular and exit boundaries are called \emph{closed} or \emph{inaccessible}, and entrance and natural boundaries are called \emph{open} or \emph{accessible}. As already indicated by the names, open boundaries are not in the state space \(J\) while closed ones are, see \cite[Proposition 16.43]{breiman1968probability}.

The behavior of the diffusion at exit, entrance and natural boundaries is fully specified by \(\s\) and \(\m\).
Regular boundaries are different in this regard.
To see this, consider Brownian motion with state space \([0, \infty)\) and absorption or reflection in the origin (\cite[Section 16.3]{breiman1968probability}). In both cases the speed measure coincides with the Lebesgue measure on \((0, \infty)\) and the origin is regular.
Hence, knowing the speed measure on \(J^\circ = (0, \infty)\) does not suffice to decide whether the origin is absorbing or reflecting. There is a way to fix this issue. Namely, the speed measure \(\m\) can be extended to \((J, \mathcal{B}(J))\) such that it also encodes the behavior at regular boundary points.
In the following we explain this for \(J = [0, \infty)\) and \(\s(0) = 0\).
Define \(\s^* \colon \mathbb{R} \to \mathbb{R}\) by setting \(\s^* (x) \triangleq \s (x)\) and \(\s^*(- x) \triangleq - \s(x)\) for \(x \in \mathbb{R}_+\).
For \(I = [0, a)\) with \(a > 0\) define \(G^*_I\) as \(G_{(-a, a)}\) from \eqref{eq: def G} with \(\s\) replaced by~\(\s^*\), and set
\[
G^\circ_I (x, y) \triangleq G^*_I (x, y) + G^*_I (x, -y), \quad x, y \in \mathbb{R}_+.
\]
By \cite[Proposition VII.3.10]{RY}, it is possible to define \(\m(\{0\})\) such that for any interval \(I = [0, a)\) with \(a > 0\) and any Borel function \(f \colon \mathbb{R}_+ = J \to \mathbb{R}_+\) we have 
\begin{align} \label{eq: extended speed measure}
	\E_x \Big[ \int_0^{\t_a} f(\X_s) ds \Big] = \int_I G^\circ_I (x, y) f(y) \m(dy), \quad x \in I.
\end{align}
Let us convince ourselves that \(\m(\{0\})\) distinguishes absorption and reflection. Taking \(f \equiv \1_{\{0\}}\) in \eqref{eq: extended speed measure} yields 
\[
\E_0 \Big[ \int_0^{\t_a} \1_{\{\X_s = 0\}} ds \Big] = 2\s(a) \m(\{0\}), \quad a > 0.
\]
This formula motivates the following definitions:
A regular boundary point \(b\) is called \emph{absorbing} if \(\m(\{b\}) = \infty\), \emph{slowly reflecting} if \(0 < \m(\{b\}) < \infty\), and \emph{instantaneously reflecting} if \(\m (\{b\}) = 0\).

The scale function and the extended speed measure determine the law of the diffusion uniquely, see \cite[Corollary 16.73]{breiman1968probability}. Finally, we stress that the (extended) speed measure can also be defined via a change of time, see \cite[Theorem 33.9]{kallenberg} or \cite[Theorem~V.47.1]{RW2}. 

The above material is mainly taken from the monographs of Breiman \cite{breiman1968probability} and Revuz and Yor \cite{RY}, where the reader can find nice introductions to the topic. For a more detailed treatment we refer to the monograph of It\^o and McKean \cite{itokean74}.
At this point, we like to mention that the monographs \cite{breiman1968probability,itokean74} use different terminologies for boundary points, which are related as follows:
\begin{center}
	\begin{tabular}{ c|c } 
		Breiman&It\^o--McKean\\
		\hline
		regular&exit and entrance\\
		exit& exit not entrance\\
		entrance& entrance not exit\\
		natural& neither exit nor entrance
	\end{tabular}
\end{center}
Our terminology is taken from Breiman \cite{breiman1968probability}. It is also worth noting that the scale function is defined consistently in the literature, while different normalizations appear for the speed measure. For example, the speed measure in It\^o and McKean \cite{itokean74} and Revuz and Yor \cite{RY} is twice the speed measure from Breiman \cite{breiman1968probability} and Rogers and Williams \cite{RW2}. We use the scaling from \cite{breiman1968probability,RW2}.

\subsection{Equivalence of the Feller--Dynkin and the Martingale Property}
Let \(C_0(J)\) be the Banach space of continuous functions \(J \to \mathbb{R}\) which are vanishing at infinity endowed with the sup-norm. The process \(\X\) is called a \emph{Feller--Dynkin (FD) process} if the semigroup \((T_t)_{t \geq 0}\), which we defined in \eqref{eq: semigroup}, is a strongly continuous semigroup on~\(C_0(J)\). 
We define the stopping time
\[
\i \triangleq \inf (t \in \mathbb{R}_+ \colon \X_t \not \in J^\circ).
\]
It is well-known (\cite[Corollary V.46.15]{RW2}) that the stopped process \(\Y \triangleq \s\hspace{0.02cm}(\X_{\cdot \wedge \i})\) is a local \(\P_x\)-martingale for all \(x \in J^\circ\).	The following theorem is our main result.
\begin{theorem}\label{theo: main1}
	The following are equivalent:
	\begin{enumerate}
		\item[\textup{(i)}] \(\X\) is an FD process.
		\item[\textup{(ii)}] \(\Y\) is a \(\P_x\)-martingale for every \(x \in J^\circ\).
		\item[\textup{(iii)}] Every open boundary point is natural.
	\end{enumerate}
\end{theorem}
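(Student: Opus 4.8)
The plan is to establish the two equivalences (ii)$\,\Leftrightarrow\,$(iii) and (i)$\,\Leftrightarrow\,$(iii) separately; together they yield the full statement. The common backbone is a reformulation of (iii) obtained by passing to natural scale. Since $\u$ and $\v$ are invariant under the space transformation $x \mapsto \s(x)$, the boundary classification of a point $b \in \partial J$ for $\X$ coincides with that of $\s(b)$ for $\Y = \s(\X)$, which is on natural scale. I would then record the elementary but crucial dichotomy: if $b \in \partial J$ has $\v(b) < \infty$ and $|\s(b)| < \infty$, then $\m$ is finite near $b$ and hence $\u(b) \le \m((c,b))\,|\s(b) - \s(c)| < \infty$, so $b$ is regular. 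Contrapositively, an \emph{entrance} boundary (where $\v(b) < \infty$ but $\u(b) = \infty$) necessarily sits at an infinite scale value, $|\s(b)| = \infty$. Closed boundaries (regular, exit) carry $\u(b) < \infty$ and hence $|\s(b)| < \infty$, so that $\Y$ is absorbed at a finite value there. Thus (iii) says precisely that wherever the scale function is unbounded the corresponding $\v$ is infinite, and the sole obstruction to (iii) is an entrance boundary: an infinite endpoint of the natural-scale state space $\s(J)$ from which entry is fast.

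For (ii)$\,\Leftrightarrow\,$(iii) I would argue directly with the continuous local martingale $\Y$. Let $(a_n, b_n) \uparrow J^\circ$ and put $\sigma_n \triangleq \t_{a_n} \wedge \t_{b_n}$, so that $\sigma_n \uparrow \i$ and each stopped process $\Y^{\sigma_n}$ is a bounded martingale; optional stopping gives $\E_x[\Y_{t \wedge \sigma_n}] = \s(x)$, while $\Y_{t \wedge \sigma_n} \to \Y_t$ almost surely by path-continuity. Hence $\Y$ is a $\P_x$-martingale if and only if the family $(\Y_{t \wedge \sigma_n})_n$ is uniformly integrable, i.e.\ if and only if no mass escapes towards an endpoint of $\s(J)$. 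When every open boundary is natural the only nontrivial endpoint is an infinite-scale natural boundary $b$, and there $\v(b) = \infty$ forces the expected one-sided hitting times $\E_x[\t_y]$, computed from the Green's function \eqref{eq: def G}, to blow up as $x \to b$; reaching high levels by the fixed time $t$ then becomes too unlikely to sustain a defect, and uniform integrability holds. When some open boundary is an entrance boundary the reverse happens: from \eqref{eq: def G} one finds $\E_x[\t_y]$ bounded as $x \to b$, the process returns from near the infinite endpoint within bounded expected time, and this fast entry produces a strict supermartingale defect $\E_x[\Y_t] < \s(x)$. This quantitative dichotomy is the classical martingale criterion for one-dimensional diffusions on natural scale.

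For (i)$\,\Leftrightarrow\,$(iii) I would first reduce the FD property to the mapping property $T_t\big(C_0(J)\big) \subseteq C_0(J)$: since $\X$ is already Feller in the weak sense and has continuous paths, $T_t f \to f$ pointwise as $t \downarrow 0$, and strong continuity then follows from the mapping property by the standard resolvent argument for positive contraction semigroups (see, e.g., \cite{kallenberg}). The mapping property is a statement at each open boundary $b$, namely $\lim_{x \to b} T_t f(x) = 0$ for every $f \in C_0(J)$. If $b$ is natural, the process started near $b$ reaches any fixed compact $K \subset J^\circ$ only slowly, quantitatively $\P_x(\X \text{ hits } K \text{ before } t) \to 0$ as $x \to b$, again via the blow-up of $\E_x[\t_y]$, so that $T_t f(x) \to 0$ and vanishing at infinity is preserved. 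If instead $b$ is an entrance boundary, the finiteness of $\v(b)$ yields a compact $K \subset J^\circ$ and $\delta > 0$ with $\liminf_{x \to b}\P_x(\X_t \in K) \ge \delta$ (uniform entry from the boundary); choosing $0 \le f \in C_0(J)$ with $f \ge \1_K$ gives $\liminf_{x \to b} T_t f(x) \ge \delta > 0$, so that $T_t f \notin C_0(J)$ and $\X$ fails to be FD. This proves (i)$\,\Leftrightarrow\,$(iii).

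The main obstacle in both equivalences is the same: turning the analytic dichotomy $\v(b) = \infty$ versus $\v(b) < \infty$ at an infinite-scale boundary into the probabilistic dichotomy ``slow'' versus ``fast and spatially uniform'' entry, by controlling $\E_x[\t_y]$ and the hitting probabilities $\P_x(\t_y \le t)$ as $x \to b$ through the Green's function \eqref{eq: def G}. For (ii) the delicate direction is the uniform integrability at an infinite-scale natural boundary (the true-martingale direction); for (i) it is the uniform-in-$x$ lower bound on entry at an entrance boundary. Once these estimates are in hand, combining (i)$\,\Leftrightarrow\,$(iii) and (ii)$\,\Leftrightarrow\,$(iii) gives the asserted equivalence of the Feller--Dynkin and the martingale property.
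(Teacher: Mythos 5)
Your architecture is sound and in fact parallels the paper's: reduce to natural scale, note that entrance boundaries necessarily sit at infinite scale values while closed boundaries sit at finite ones, and show that each of (i) and (ii) is equivalent to ``all infinite boundaries are natural.'' The boundary-classification facts in your first paragraph are correct. The gap is in the analytic core, which you assert rather than prove. Both of your equivalences hinge on the inference ``$\E_x[\t_y]$ blows up as $x \to b$ (natural boundary), hence $\P_x(\t_y \le t) \to 0$ for fixed $t$,'' and, for the martingale part, on the strictly stronger weighted statement $\lim_{y \to \infty} y\,\P_x(\t_y \le t) = 0$. Neither follows from expected-hitting-time asymptotics: Markov's inequality bounds $\P_x(\t_y > t)$ from above, i.e.\ it bounds $\P_x(\t_y \le t)$ from \emph{below}, which is the wrong direction, and a nonnegative random variable with arbitrarily large (even infinite) mean can satisfy $\P(\t \le t) \ge \delta > 0$. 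So ``too unlikely to reach by time $t$'' is an intuition, not a deduction. Moreover, your uniform-integrability discussion for (ii) never engages with the weight $y$: the martingale defect on natural scale is exactly $\lim_{y} y\,\P_x(\t_y \le t)$, and controlling $\P_x(\t_y \le t)$ alone is insufficient. This is precisely where the paper has to work: it replaces fixed-time probabilities by the Laplace transforms $\E_x[e^{-\alpha \t_y}]$ (Lemma \ref{lem: main1}, conditions \eqref{eq: equiv fd} and \eqref{eq: equiv mg}), and then proves the dichotomy through the monotone solutions $g_1, g_2$ of $\tfrac{1}{2\alpha} \tfrac{d}{d\m}\tfrac{d^+ g}{dx} = g$, the constancy of the Wronskian, the series construction of a decreasing solution with positive limit at any non-natural boundary (Lemma \ref{lem: func}), and Breiman's uniqueness theorem (Lemma \ref{lem: main2}). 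Without this machinery --- or an explicit appeal to Kotani/Hulley for (ii)$\Leftrightarrow$(iii) and to Azencott/Feller for (i)$\Leftrightarrow$(iii), at which point the argument becomes a citation of the theorem rather than a proof of it --- your two ``quantitative dichotomies'' remain unestablished.

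Two smaller points. First, at an entrance boundary your claimed uniform entry bound $\liminf_{x \to b} \P_x(\X_t \in K) \ge \delta$ for \emph{every} $t > 0$ does not follow from $\sup_x \E_x[\t_y] < \infty$ alone; Markov's inequality plus the strong Markov property yield it only for $t$ large. This is harmless (FD failure needs only one $t$), but the step should be stated that way. Second, in the doubly infinite case $J = \mathbb{R}$ one must first establish $\X_t \in L^1(\P_x)$ before any martingale bookkeeping; the paper does this with an explicit Lyapunov function (Lemma \ref{lem: inte}). Your UI framing can absorb the integrability issue in principle, but only after the missing estimate above is supplied, since uniform integrability of $(\Y_{t \wedge \sigma_n})_n$ is exactly what is in question.
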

As an immediate consequence of Theorem \ref{theo: main1}, we obtain the following:
\begin{corollary}
	Suppose that \(\X\) is on natural scale and that all regular boundaries are absorbing. Then, \(\X\) is an FD process if and only if it is a \(\P_x\)-martingale for all \(x \in J^\circ\).
\end{corollary}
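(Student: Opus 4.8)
The plan is to read the corollary directly off Theorem~\ref{theo: main1}, whose equivalence (i)\(\Leftrightarrow\)(ii) already contains all the analytic content; what remains is to check that under the two additional hypotheses the space-transformed stopped process \(\Y = \s(\X_{\cdot \wedge \i})\) coincides with \(\X\) itself, so that ``\(\Y\) is a \(\P_x\)-martingale'' becomes verbatim ``\(\X\) is a \(\P_x\)-martingale''. For the first hypothesis I would use that, \(\X\) being on natural scale, \(\on{Id}\) is a scale function; as scale functions are unique only up to increasing affine maps I may take \(\s = \on{Id}\), whence \(\Y = \X_{\cdot \wedge \i}\). A general affine \(\s\) would change nothing here, a strictly positive slope preserving the martingale property.

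The crucial point is then that \(\X_{\cdot \wedge \i} = \X\), i.e.\ that \(\X\) is frozen after the exit time \(\i\) of the interior \(J^\circ\). Because open boundary points do not belong to \(J\), on the event \(\{\i < \infty\}\) the value \(\X_\i\) must be a closed boundary point, hence either regular or of exit type. Regular boundaries are absorbing by assumption, and exit boundaries are absorbing by their very nature, their behaviour being completely determined by \(\s\) and \(\m\); in either case \(\X_t = \X_\i\) for all \(t \geq \i\), so that \(\X_{t \wedge \i} = \X_t\) for every \(t\). If no boundary is closed, then \(\i = \infty\) \(\P_x\)-almost surely and the identity is trivial. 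I expect this reduction---verifying that every closed boundary the diffusion can actually reach is genuinely absorbing---to be the only delicate point, and the only place where the hypotheses are really used.

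Putting the two steps together, condition~(ii) of Theorem~\ref{theo: main1} now reads precisely ``\(\X\) is a \(\P_x\)-martingale for every \(x \in J^\circ\)'', while condition~(i) is the FD property. Hence the equivalence (i)\(\Leftrightarrow\)(ii) yields the assertion of the corollary.
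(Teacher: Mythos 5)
Your proposal is correct and follows essentially the same route as the paper: reduce to the identity \(\X = \X_{\cdot \wedge \i}\) a.s.\ (using that exit boundaries can never be left by the diffusion and that regular boundaries are absorbing by hypothesis), then invoke the equivalence (i)\(\Leftrightarrow\)(ii) of Theorem~\ref{theo: main1}. The only cosmetic difference is that the paper justifies the absorption at exit boundaries by a citation (Breiman, Problem~14, p.~370) rather than your informal ``determined by \(\s\) and \(\m\)'' remark, which by itself is not a proof of that fact; your explicit handling of the scale normalization \(\s = \on{Id}\) is a harmless addition.
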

\begin{proof}
	We recall that exit boundaries are always absorbing in the sense that they cannot be left by the diffusion, see \cite[Problem 14, p. 370]{breiman1968probability}. Hence, under the assumption that all regular boundaries are absorbing, we have a.s. \(\X = \X_{\cdot \wedge \i}\). Thanks to this observation, the claim follows directly from Theorem \ref{theo: main1}.
\end{proof}
\begin{remark}
	Urusov and Zervos \cite{urusov17} proved that (iii) in Theorem \ref{theo: main1} is equivalent to the martingale properties of the so-called \emph{\(r\)-excessive local martingales}. By virtue of Theorem \ref{theo: main1}, their result provides another characterization of the FD property in terms of martingale properties.
\end{remark}
\begin{remark}
	A standard example for a non-FD process is the three-dimensional Bessel process (denoted \(\on{Bes}^3\); \cite[Section VI.3]{RY}) and its inverse is a standard example for a strict local martingale. 
	These examples are connected via Theorem \ref{theo: main1} as \(\s (x) = -1/x\) for~\(x > 0\) is a scale function of \(\on{Bes}^3\). Let us stress that the state space of \(\on{Bes}^3\) is necessarily \((0, \infty)\) as it is otherwise no \emph{regular} diffusion.
\end{remark}

Our contribution in Theorem \ref{theo: main1} is the equivalence of (i) and (ii), which we think is quite surprising. In Section \ref{sec: literature} below we comment in detail on related literature.
In the proof of Theorem \ref{theo: main1}, which is given in Section \ref{sec: pf} below, we will see that \(\Y\) is a true martingale if \(\X\) needs a long time to get close to open boundary points and that \(\X\) is an FD process if it needs a long time to get away from them. It seems to be a coincidence that these properties are equivalent. Indeed, as we discuss in Section \ref{sec: multi}, the equivalence of the FD and the martingale property is a one-dimensional phenomenon.

It is well-known (\cite[Theorem 33.9]{kallenberg} or \cite[Theorem V.47.1]{RW2}) that any regular diffusion on natural scale is a time change of Brownian motion. 
On page 280 of their monograph \cite{RW1} Rogers and Williams write the following: \emph{Deciding whether or not the FD property is preserved under probabilistic operations such as time-substitution is generally a very difficult problem.} In the same spirit, it is well-known that the \emph{local} martingale property is preserved by time changes (\cite[Section V.1]{RY}), but not necessarily the \emph{true} martingale property. 
By virtue of these observations, the equivalence of (i) and (ii) in Theorem \ref{theo: main1} can be seen as follows: The time change related to the diffusion \(\X\) preserves the martingale property of the underlying Brownian motion precisely when it preserves its FD property.

\subsection{Comments on Related Literature} \label{sec: literature}
The question when a non-negative It\^o diffusion with dynamics
\begin{align*} 
	d \X_t = \sigma (\X_t) d W_t, \quad W = \text{Brownian motion},
\end{align*}
is a true martingale is e.g. interesting for mathematical finance, where the martingale property decides about the absence and existence of certain arbitrage opportunities. Motivated by such an application, Delbaen and Shirakawa \cite{Delbaen2002} proved an analytic integral test for the martingale property. 
Later, Kotani \cite{Kotani2006} and Hulley \cite{Hulley2009} gave answers for general regular diffusions on natural scale via integral tests depending on the speed measure. More precisely, the equivalence of (ii) and (iii) in Theorem \ref{theo: main1} is their result.

The quite different question whether an It\^o diffusion with drift is an FD process was studied by Feller \cite{Feller} and Cl\'ement and Timmermans \cite{clem86} from an analytic perspective, and by Azencott \cite{Azencott1974} from a more probabilistic point of view. We emphasis that Azencott was also interested in higher dimensional settings. These references provide the equivalence of (i) and (iii) in Theorem~\ref{theo: main1} for certain It\^o diffusions. Again for It\^o diffusions, the implication (iii) \(\Rightarrow\) (i) is also given in the monograph of Ethier and Kurtz, see \cite[Corollary~8.1.2]{EK}. Kallenberg (\cite[Theorem~33.13]{kallenberg}) proved the following related result: Form \(\overline{J}\) via attaching entrance boundaries of \(\X\) to \(J\). Then, \(\X\) extends to an FD process on \(\overline{J}\). As \(\overline{J} = J\) in case all open boundary points are natural, this theorem also implies the implication (iii) \(\Rightarrow\) (i) from Theorem \ref{theo: main1}.

Among other things, Eberle \cite{eberle} studied whether for a regular second order differential operator \(\mathcal{L}\) on \(C_c^\infty(J^\circ)\) there exists at most one strongly continuous semigroup on a suitable weighted \(L^p\) space whose generator extends \(\mathcal{L}\). If this is the case, \(\mathcal{L}\) is said to be \emph{\(L^p\)-unique}.
As noted in \cite[Remark, p. 3]{eberle}, roughly speaking uniqueness of FD semigroups can be viewed as some limit of \(L^p\)-uniqueness as \(p \to \infty\). By this intuition, the equivalence of (i) and (iii) from Theorem \ref{theo: main1} mirrors the results summarized in \cite[(i) -- (iv), pp. 174 -- 176]{eberle}: Provided no boundary point is regular, \(L^p\)-uniqueness holds for all large \(p > 1\) if and only if open boundaries are natural. In case the boundary contains a regular point it does not suffice to specify the speed measure on \(J^\circ\) and any reasonable type of uniqueness fails, see also \cite[(i), p. 174]{eberle} for a similar comment. 

Our main observation is the equivalence of (i) and (ii) in Theorem \ref{theo: main1}. The purpose of this paper is to report this phenomenon and, as we find it not intuitive, to explain it via a complete and (mainly) self-contained proof, which borrows and connects many ideas from~\cite{Azencott1974,Hulley2009,Kotani2006}.

\subsection{A Counterexample for the Multi-Dimensional Case} \label{sec: multi}
It is natural to ask whether the equivalence of (i) and (ii) from Theorem \ref{theo: main1} also holds in a multi-dimensional setting. In this section we give an example, inspired by a comment on page 238 in \cite{Azencott1974}, which shows that this is not the case. In other words, the equivalence of the FD and the martingale property is a one-dimensional phenomenon.

Take \(d \geq 2\), define \(\Omega \triangleq C(\mathbb{R}_+, \mathbb{R}^d)\) and denote the coordinate process by \(\B =(\B_t)_{t \geq 0}\). Let \(\mathcal{F}\) and \((\mathcal{F}_t)_{t \geq 0}\) be the \(\sigma\)-field and the (right-continuous) filtration generated by~\(\B\). Furthermore, let \(\Q_x\) be 
the \(d\)-dimensional Wiener measure with starting point \(x \in \mathbb{R}^d\).
Let \(D \subset \mathbb{R}^d\) be a non-empty domain of finite Lebesgue measure.
A point \(\I \in \partial D\) is called \emph{irregular} if
\(
\Q_{\I} (\S= 0)= 0\) with \(\S\triangleq \inf(t > 0 \colon \B_t \not \in D).\) 
Irregularity can also be defined via the Dirichlet problem, see \cite[Theorem 4.2.2]{portstone}.
The set of irregular points is denoted by \(\mathcal{I}\). Note that \(\Q_\I (\S = 0) = 1\) for all \(\I \in \partial D \backslash \mathcal{I}\) by Blumenthal's zero-one law.

\begin{example} \label{ex: irregular}
	\begin{enumerate}
		\item[\textup{(i)}] If \(D \equiv \{x \in \mathbb{R}^d \colon 0 < \|x\| < 1\}\), then \(\mathcal{I} = \{0\}\).
		\item[(\textup{ii)}] An example for a domain with a connected boundary containing an irregular point is \emph{Lebesgue's thorn}, see \cite[Example 4.2.17]{KaraShre}. 
	\end{enumerate}
\end{example}
Define \[D' \triangleq \on{cl}(D) \backslash \mathcal{I},\qquad
\P_x \triangleq \Q_x \circ \B_{\cdot \wedge \T}^{-1} \colon \ x \in D'.\]
Finally, we set
\[
\M \triangleq (\Omega, \mathcal{F}, \mathcal{F}_t \colon t \geq 0, \B_t \colon t \geq 0, \theta_t\colon t \geq 0, \P_x \colon x \in D'),
\]
where \((\theta_t)_{t \geq 0}\) is the usual shift operator on \(\Omega\), i.e. \(\theta_s \omega (t)= \omega(t + s)\) for \(\omega \in \Omega\) and~\(s, t \in \mathbb{R}_+\).
\begin{theorem} \label{prop: CE}
	\(\M\) is a strong Markov process with state space \((D', \mathcal{B}(D'))\) and \(\B\) is a \(\P_x\)-martingale for every \(x \in D\). Moreover, \(\M\) is an FD process if and only if \hspace{0.05cm}\(\mathcal{I} = \emptyset\). 
\end{theorem}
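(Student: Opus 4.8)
The plan is to prove the three assertions separately, since they rest on different inputs. For the strong Markov property I would start from the fact that $d$-dimensional Brownian motion $(\B,\Q_x)$ is itself a strong Markov (indeed Feller) family and that $\S=\tau_D$ is an $(\mathcal F_t)$-stopping time, so the stopped family $\P_x=\Q_x\circ\B_{\cdot\wedge\S}^{-1}$ inherits the strong Markov property from the standard theory of processes stopped at the first exit time of an open set; measurability of $x\mapsto\P_x(F)$ then reduces to that of $x\mapsto\Q_x$. The role of deleting $\mathcal I$ is to make $D'$ invariant under the stopped dynamics. Two facts deliver this. First, a regular boundary point $o\in\partial D\setminus\mathcal I$ satisfies $\Q_o(\S=0)=1$, so the stopped path started there is $\Q_o$-a.s.\ constant and never leaves $D'$. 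Second, by Kellogg's theorem the set $\mathcal I$ of irregular points is polar, hence $\Q_x$-a.s.\ never hit; combined with path-continuity this gives $\B_\S\in\partial D\setminus\mathcal I\subseteq D'$ for every $x\in D$, so the exit point always lands in the state space.

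For the martingale property, each coordinate of $\B$ is a continuous $\Q_x$-martingale, so $\B_{\cdot\wedge\S}$ is a continuous local $\P_x$-martingale with $\langle\B^i_{\cdot\wedge\S}\rangle_\infty=\S$. To upgrade this to a true martingale it suffices to show $\E_x[\S]<\infty$, and here the hypothesis $|D|<\infty$ enters: by Schwarz symmetrization the torsion function $x\mapsto\E_x[\tau_D]$ is dominated by that of the ball of the same volume, giving the dimensional bound $\E_x[\S]\le c_d\,|D|^{2/d}<\infty$. Hence $\langle\B^i_{\cdot\wedge\S}\rangle_\infty\in L^1$, and $\B_{\cdot\wedge\S}$ is an $L^2$-bounded, uniformly integrable, true $\P_x$-martingale for every $x\in D$.

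For the Feller–Dynkin dichotomy I would argue both implications through the transition operator $T_tf(x)=\E_x[f(\B_{t\wedge\S})]$ on $C_0(D')$. If $\mathcal I=\emptyset$ then $D'=\on{cl}(D)$ and every boundary point is regular; continuity of $T_tf$ on the open set $D$ is immediate from path-continuity, while continuity up to a regular boundary point $o$ is the classical consequence of regularity (as $x\to o$ one has $\S\to 0$, hence $\B_{t\wedge\S}\to o$ and $T_tf(x)\to f(o)$). Together with the decay furnished by $|D|<\infty$ this gives $T_t(C_0)\subseteq C_0$, and strong continuity follows from path-continuity, so $\M$ is FD. Conversely, if $o\in\mathcal I$ then $o\in\overline{D'}\setminus D'$ is an ideal boundary point of the locally compact space $D'$, so every $f\in C_0(D')$ must vanish as $x\to o$. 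I would exhibit a bump $f\in C_c(D')$ supported on $\bar B(p,\rho)\subseteq D$ with $f\equiv 1$ near $p$, and note that on the open event $A=\{\B_s\in D\ \forall s\le t,\ \B_t\in B(p,\rho/2)\}$ one has $\B_{t\wedge\S}=\B_t$ and $f(\B_{t\wedge\S})=1$, so $T_tf(x)\ge\Q_x(A)$. Since $o$ is irregular, $\Q_o(\S>0)=1$, and the support theorem yields $\Q_o(A)>0$; as $A$ is open, portmanteau together with the continuity $\Q_x\Rightarrow\Q_o$ gives $\liminf_{x\to o}T_tf(x)\ge\Q_o(A)>0=\lim_{x\to o}f(x)$. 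Hence $T_tf\notin C_0(D')$, the operators $(T_t)$ do not preserve $C_0(D')$, and $\M$ is not FD.

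The main obstacle is the converse half of the dichotomy: the correct reading is that deleting $\mathcal I$ turns each irregular point into a point at infinity of $D'$, and the obstruction to the FD property is precisely that the semigroup cannot send $C_0$-functions vanishing at such a point back to functions vanishing there. Making this quantitative—bounding $\liminf_{x\to o}\Q_x(\S>t)$ away from zero as $x\to o$—is exactly where the definition of irregularity must be coupled with the weak continuity of the Wiener laws and the support theorem; the mirror statement for the forward implication, continuity of $T_tf$ up to regular boundary points, is the classical regularity (Wiener) criterion. The remaining items (the strong Markov bookkeeping and strong continuity) are routine once $\mathcal I$ is known to be polar.
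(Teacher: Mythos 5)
Your first two assertions are essentially sound. The strong Markov part matches what the paper cites to It\^o--McKean, and your martingale argument (integrable quadratic variation, since \(\E_x[\T]<\infty\) for domains of finite volume, hence an \(L^2\)-bounded true martingale) is a correct alternative to the paper's more elementary route, which simply applies optional stopping to the bounded stopping times \(t\wedge\T\) and the identity \(\E^{\P_x}[\B_t\1_G]=\E^{\Q_x}[\B_{t\wedge\T}\1_{\{\B_{\cdot\wedge\T}\in G\}}]\), and which does not need \(|D|<\infty\) at all. The forward FD implication is the classical regular-boundary argument, which the paper outsources to Knight. The genuine gap is in the converse implication, and it sits exactly at the step you yourself single out as the crux.

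First, a small but real slip: for your event \(A=\{\B_s\in D\ \forall s\le t,\ \B_t\in B(p,\rho/2)\}\) one has \(\Q_{\I}(A)=0\), not \(>0\), because under \(\Q_{\I}\) the path starts at \(\B_0=\I\in\partial D\), which does not lie in the open set \(D\); irregularity only gives \(\Q_{\I}(\T>0)=1\) with \(\T=\inf(t>0\colon \B_t\notin D)\). So you must work with \(A'=\{\T>t,\ \B_t\in B(p,\rho/2)\}\). The fatal problem is the portmanteau step applied to \(A'\). This event is \emph{not} open in \(C(\mathbb{R}_+,\mathbb{R}^d)\): any \(\Q_{\I}\)-typical path in \(A'\) starts at \(\I\in\partial D\), and since points of \(D^c\) accumulate at \(\I\), arbitrarily small uniform perturbations near time \(0\) produce paths that leave \(D\). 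Hence the interior of \(A'\) is \(\Q_{\I}\)-null, and portmanteau only yields \(\liminf_{x\to\I}\Q_x(A')\ge\Q_{\I}(\on{int}A')=0\), which is vacuous.

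Worse, the conclusion you aim for, \(\liminf_{x\to\I}T_tf(x)\ge \Q_\I(A')>0\), is \emph{false} in general, so no soft weak-convergence argument can repair it. Take Lebesgue's thorn and let \(x\to\I\) along points of \(D\) chosen very close to lateral-surface points \(y\to\I\) of the thorn; these surface points are regular (cone condition), so \(\lim_{x\to y}\Q_x(\T\le t)=1\) for each such \(y\), and a diagonal choice gives a sequence \(x_n\to\I\) with \(\Q_{x_n}(\T>t)\to0\). Since your \(f\) is supported in \(\bar B(p,\rho)\subset D\), it vanishes on \(\partial D\), whence \(T_tf(x_n)\le\|f\|_\infty\,\Q_{x_n}(\T>t)\to0\). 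Only the \(\limsup\) statement is true, and that is precisely the nontrivial potential-theoretic input the paper imports from Port--Stone (Proposition 4.2.14): there is a compact \(K\subset D'\) with \(\limsup_{x\to\I,\,x\in D}\Q_x(T_K<\T)>0\). The paper then converts this into \(\limsup_{x\to\I}\E^{\P_x}[e^{-\alpha T_K}]>0\) using the exponential moment bound for \(\T\) (here \(|D|<\infty\) enters), Galmarino's test and Cauchy--Schwarz, and contradicts the resolvent-type consequence of the FD property. Weak continuity of \(x\mapsto\Q_x\) cannot substitute for this: it holds at every boundary point, regular or irregular alike, and is blind to the discontinuity of \(x\mapsto\Q_x(\T>t)\) at \(\I\) that your argument must exploit. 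To close your proof you need to replace the portmanteau step by a genuine capacity/fine-topology argument, or cite the Port--Stone proposition as the paper does.
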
	
\begin{discussion}
	To see the connection of Theorems \ref{theo: main1} and \ref{prop: CE}, note that an irregular boundary point can be viewed as a multi-dimensional version of an entrance boundary: A point \(\I \in \partial D\) is irregular if Brownian motion started in \(\I\) enters \(D\) immediately. Further, as \(\mathcal{I}\) is a polar set (\cite[Theorem 2.6.3]{portstone}), Brownian motion never hits \(\mathcal{I}\) when started in~\(D'\). 
	Hence, roughly speaking, Theorem \ref{prop: CE} shows that \(\M\) is an FD process if and only if there are no entrance boundary points, which is also the equivalence of (i) and (iii) in Theorem~\ref{theo: main1}. We point to the difference that entrance boundaries are necessarily infinite for diffusions on natural scale, while irregular points are elements of \(\mathbb{R}^d\). This is related to the well-known fact (\cite[Proposition~2.3.2]{portstone}) that there are \emph{no} irregular points in the one-dimensional case. Indeed, for \(d = 1\) the system \(\M\) is also known to be an FD process (this is confirmed by Theorem \ref{theo: main1}).
	In contrast to the FD property, irregular points do not affect the martingale property.
	
	We have excluded the set \(\mathcal{I}\) of irregular points from the state space as this seems to be closer to the one-dimensional setting in which entrance boundaries are excluded by regularity of the diffusion.
\end{discussion}

\begin{proof}[Proof of Theorem \ref{prop: CE}]
	The strong Markov property of \(\M\) can be proved as in \cite[Section~3.9, pp. 102 -- 103]{itokean74}. 
	
	The martingale property follows from those of Brownian motion and the optional stopping theorem. 
	To see this, first note that \(\X_{\cdot \wedge \T}^{-1} (\mathcal{F}_t) \subset \mathcal{F}_{t \wedge \T}\) for all \(t \in \mathbb{R}_+\). Then, the optional stopping theorem yields that for all \(s<t\) and \(G \in \mathcal{F}_s\) we have \(\B_t, \B_s \in L^1 (\P_x)\) and
	\[
	\E^{\P_x} \big[ \B_t \1_G \big] = \E^{\Q_x} \big[ \B_{t \wedge \T} \1_{\{\B_{\cdot \wedge \T} \in G\}} \big] = \E^{\Q_x} \big[ \B_{s \wedge \T} \1_{\{\B_{\cdot \wedge \T} \in G\}} \big] = \E^{\P_x} \big[ \B_s \1_G \big].
	\]
	This is the martingale property. 
	
	If \(\mathcal{I} = \emptyset\), then \(\M\) is an FD process by \cite[Theorem 4.1.9]{knight}.
	We now show the converse direction, i.e. we assume that \(\mathcal{I} \not = \emptyset\) and we 
	take \(\I\in \mathcal{I}\). Thanks to \cite[Proposition~4.2.14]{portstone}, there exists a compact set \(K \subset D'\) such that
	\begin{align}\label{eq: PS}
		\limsup_{\substack{x \to \I\\ x \in D}} \Q_{x} (T_K < \T) > 0, \quad T_K \triangleq \inf (t > 0 \colon \B_t \in K).
	\end{align}
	Furthermore, by \cite[(X), p. 148]{chung} and the assumption that \(D\) has finite Lebesgue measure, there exists an \(\alpha > 0\) such that 
	\begin{align}\label{eq: exp mom hit}
		\sup_{x \in \mathbb{R}^d} \E^{\Q_x} \big[ e^{\alpha \T} \big] < \infty.
	\end{align}
	Using Galmarino's test (\cite[p.~86]{itokean74}) and the Cauchy--Schwarz inequality, for all \(x \in D\) we obtain 
	\begin{equation}\label{eq: CS ineq} \begin{split}
			\Q_x (T_K < \T) &= \Q_x (T_K(\B_{\cdot \wedge \T}) < \T) \\[3pt]&\leq \E^{\Q_x} \big[ e^{\alpha (\T- T_K(\B_{\cdot \wedge \T}))/2} \big]\vspace{0.05cm} \\&\leq \sup_{z \in \mathbb{R}^d}\E^{\Q_z} \big[ e^{\alpha \T} \big]^{\frac{1}{2}} \E^{\P_x} \big[ e^{- \alpha T_K} \big]^{\frac{1}{2}}.
		\end{split}
	\end{equation}
	By \cite[Remark 1]{criens2020}, we have
	\begin{align} \label{eq: rem SPA}
		\M \text{ is an FD process } \Rightarrow \ \lim_{\substack{x \to \I\\ x \in D}} \E^{\P_{x}} \big[ e^{- \alpha T_K} \big] = 0.
	\end{align}
	Finally, \eqref{eq: PS} -- \eqref{eq: rem SPA} yield that \(\M\) is no FD process.
\end{proof}

\subsection{Equivalence of Cauchy Problems in It\^o Diffusion Settings}
It is well-known that the FD and the martingale property have close relations to existence and uniqueness properties of Cauchy problems. Thanks to Theorem \ref{theo: main1}, we can connect these relations.

Suppose that \(J = (l, r)\) for \(- \infty \leq l < r \leq \infty\) and that
\[
\s (x) \triangleq \int_{c}^x \exp \Big( - \int_c^\xi \frac{2 b(z) dz}{\sigma^2(z)} \Big) d\xi, \qquad \m(dx) \triangleq \frac{dx}{\s' (x) \sigma^2(x)},
\]
where \(c \in J\) is an arbitrary reference point and \(b \colon J \to \mathbb{R}\) and \(\sigma \colon J \to \mathbb{R} \backslash \{0\}\) are continuous functions.
Moreover, we set
\[
S f \triangleq b f' + \tfrac{\sigma^2}{2} f''\ \text{ for }\ f \in D (S) \triangleq \big\{f \in C_0 (J) \cap C^2 (J)\colon Sf \in C_0 (J) \big\}.
\]
\begin{remark}
	In case \(\X\) is an FD process, it is known that \((S, D(S))\) is its infinitesimal generator, see \cite[Corollary 8.1.2]{EK}.
\end{remark}
We start with a consequence of a main result from \cite{bay2010} which relates (ii) and (iii) from Theorem \ref{theo: main1} to existence and uniqueness of a classical solution to a certain Cauchy problem with boundary datum of linear growth.
\begin{theorem} \label{theo: bay repeat}
	Suppose that \(J = (0, \infty), b \equiv 0\) and that \(\sigma\) is locally H\"older continuous with exponent \(1/2\). Then, \textup{(i) -- (iii)} from Theorem \ref{theo: main1} are equivalent to the following:
	\begin{enumerate}
		\item[\textup{(iv)}] For every continuous function \(g \colon \mathbb{R}_+ \to \mathbb{R}_+\) of linear growth, i.e. \(|g(x)| \leq C(1 + |x|)\) with~\(C > 0\), and any finite time horizon \(T > 0\) the Cauchy problem 
		\begin{align*}
			\begin{cases}
				\frac{du}{dt} + \frac{\sigma^2}{2} u'' = 0,& \text{on } (0, \infty) \times [0, T),\\
				u(0, t) = g(0),& t \in [0, T],\\
				u(x, T) = g(x),& x \in \mathbb{R}_+, 
			\end{cases}
		\end{align*}
		has a unique solution \(u \colon \mathbb{R}_+ \times [0, T] \to \mathbb{R}\) such that \(u \in C^{2, 1} ((0, \infty) \times [0, T))\).
	\end{enumerate}
\end{theorem}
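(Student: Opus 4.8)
The plan is to supply the one missing equivalence, $(\textup{iv}) \Leftrightarrow (\textup{ii})$, and then close the loop through Theorem~\ref{theo: main1}. First I would record that under the present hypotheses the diffusion is especially transparent: since $b \equiv 0$ the function $\s = \on{Id}$ is a scale function, so $\X$ is already on natural scale and solves $d\X_t = \sigma(\X_t)\,dW_t$ with speed measure $\m(dx) = dx/\sigma^2(x)$. Because $J = (0,\infty)$ is open, both endpoints $0$ and $\infty$ are open boundaries, hence inaccessible from the interior, so $\i = \infty$ $\P_x$-a.s.\ and $\Y = \X$; thus (ii) literally reads \emph{$\X$ is a $\P_x$-martingale}. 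I would also note that at the finite left endpoint an entrance boundary is impossible: Fubini gives $\u(0) = \int_0^c y\,\sigma^{-2}(y)\,dy$ and $\v(0) = \int_0^c (c-y)\,\sigma^{-2}(y)\,dy$, and near $0$ the integrand of $\u(0)$ is dominated by that of $\v(0)$, so $\u(0) = \infty$ forces $\v(0) = \infty$. Hence $0$ is automatically natural, and the only effective obstruction in (iii) sits at $\infty$.

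Next I would identify the probabilistic candidate $u(x,t) \triangleq \E_x[g(\X_{T-t})]$, which a formal application of the Kolmogorov backward equation shows to solve $\partial_t u + \tfrac{\sigma^2}{2}u'' = 0$ with terminal value $g$; compatibility of the boundary datum $u(0,t) = g(0)$ with the natural boundary at $0$ is part of the regularity statement I would draw from \cite{bay2010}. The content of (iv) is that $u$ is the \emph{unique} classical solution of linear growth, and this is exactly where the martingale property enters. The decisive observation is that for \emph{any} classical solution $v$ of linear growth, It\^o's formula together with the equation yields that $N_s \triangleq v(\X_s, t+s)$ is a local martingale on $[0,T-t]$ with $N_0 = v(x,t)$ and $N_{T-t} = g(\X_{T-t})$; hence $v(x,t) = \E_x[g(\X_{T-t})] = u(x,t)$ exactly when $N$ is a \emph{true} martingale. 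Linear growth gives $|N_s| \le C(1 + \X_s)$, so if $\X$ is a martingale then $\{\X_s : s \in [0,T-t]\}$ is uniformly integrable, $N$ is dominated, and uniqueness follows. Conversely, if $\X$ is a strict local martingale, then for $g = \on{Id}$ the functions $v_1(x,t) = x$ and $v_2(x,t) = \E_x[\X_{T-t}]$ are distinct linear-growth solutions, since $\E_x[\X_{T-t}] < x$ for $t < T$ while $\E_x[\X_{T-t}] \to 0 = g(0)$ as $x \to 0$ by the supermartingale bound $0 \le \E_x[\X_{T-t}] \le x$, so uniqueness fails.

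This is precisely the mechanism behind the main result of \cite{bay2010}, whose hypotheses (here $b \equiv 0$ and $\sigma$ locally H\"older continuous with exponent $1/2$, which guarantees smoothness of the transition density and hence $u \in C^{2,1}$) match ours; I would invoke it to obtain $(\textup{iv}) \Leftrightarrow (\X \text{ is a } \P_x\text{-martingale}) = (\textup{ii})$, and then combine with the equivalences of Theorem~\ref{theo: main1}. The main obstacle I anticipate is bookkeeping rather than conceptual: one must verify that the martingale condition appearing in \cite{bay2010} is verbatim our (ii) rather than merely an integral test at $\infty$, and that the boundary condition $u(0,t) = g(0)$ is consistent with the behaviour at $0$. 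The remark above that $0$ is forced to be natural is what reconciles the two formulations and lets the single obstruction at $\infty$ drive both the analytic uniqueness and the probabilistic martingale property.
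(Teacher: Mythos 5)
Your proposal is correct, and at bottom it rests on the same two inputs as the paper's proof: \cite[Theorem 2]{bay2010} and Theorem \ref{theo: main1}; the difference is which equivalence you use as the bridge. The paper's proof is a one-line citation of (iii) \(\Leftrightarrow\) (iv) from \cite{bay2010}, whereas you bridge through (ii) \(\Leftrightarrow\) (iv), first reducing (ii) to the statement that \(\X\) itself is a \(\P_x\)-martingale (correct: \(J\) open plus conservativeness give \(\i = \infty\), hence \(\Y = \X\)), and then sketching the mechanism that the paper only alludes to in the discussion following the theorem: It\^o's formula makes \(N_s = v(\X_s, t+s)\) a local martingale for any linear-growth classical solution \(v\); true martingality of \(\X\) upgrades \(N\) to a true martingale and forces \(v(x,t) = \E_x[g(\X_{T-t})]\); and a strict local martingale \(\X\) produces the two distinct solutions \(x\) and \(\E_x[\X_{T-t}]\) for \(g = \on{Id}\). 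Your observation that \(0\) is automatically natural (entrance at \(0\) is impossible because, with \(\s = \on{Id}\), \(\u(0) = \infty\) forces \(\v(0) = \infty\)) is genuinely useful bookkeeping that the paper leaves implicit: it is what makes the single integral test at \(\infty\) in \cite{bay2010} coincide with condition (iii), which formally concerns both boundary points. Two cautions are in order. First, your uniqueness step is stated too loosely: uniform integrability of \(\{\X_s : s \leq T-t\}\) over \emph{deterministic} times does not upgrade a dominated local martingale to a martingale (the inverse three-dimensional Bessel process is an \(L^2\)-bounded, hence uniformly integrable, strict local martingale); what your domination \(|N_s| \leq C(1 + \X_s)\) actually yields is uniform integrability of \(\{N_\tau : \tau \leq T-t \text{ stopping time}\}\), because optional stopping applied to the true martingale \(\X\) gives \(\X_\tau = \E_x[\X_{T-t} \,|\, \mathcal{F}_\tau]\); that is, \(N\) is of class (D), which is the correct sufficient condition. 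Second, both directions of your bridge still require the analytic content of \cite{bay2010} (existence of a classical solution and the \(C^{2,1}\)-regularity of \(\E_x[g(\X_{T-t})]\) and of \(\E_x[\X_{T-t}]\)), so your argument does not dispense with the citation; it re-derives its probabilistic half while leaving the PDE half where the paper leaves it.
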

\begin{proof}
	The equivalence of (iii) and (iv) follows from \cite[Theorem 2]{bay2010}.
\end{proof}
In case (iii) fails, it has been shown in \cite{cet18, cet20} that for appropriate boundary data the associated Cauchy problem still has a solution which is unique among all solutions with certain non-standard boundary behavior. 

The proof of Theorem \ref{theo: bay repeat} in \cite{bay2010} uses PDE theory in combination with uniform integrability properties, which stem from the martingale property of \(\X\), i.e. item (ii) from Theorem \ref{theo: main1}.

Next, we provide another characterization of (i) -- (iii) from Theorem \ref{theo: main1} in terms of properties of Cauchy problems. 

\begin{theorem}\label{prop: Cauchy}
	\textup{(i) -- (iii)} from Theorem \ref{theo: main1} are equivalent to each of the following:
	\begin{enumerate}
		\item[\textup{(v)}]
		For all \(g \in D(S)\) the Cauchy problem 
		\[
		\frac{du}{dt} = S u, \quad u (0) = g,
		\]
		has a unique solution \(u \colon \mathbb{R}_+ \to C_0(J)\) which is a continuously differentiable function such that \(u (t) \in D(S)\) for all \(t > 0\).
		\item[\textup{(vi)}] For all \(g \in D(S)\) there exists a continuous function \(u \colon \mathbb{R}_+ \to C_0(J)\) such that \(u (0) = g, u (t) \in D(S)\) for all \(t > 0\), \(S u \colon (0, \infty) \to C_0(J)\) is continuous, and 
		\[
		u (t) - u(\varepsilon) = \int_{\varepsilon}^t S u (s) ds
		\]
		for all \(t > \varepsilon > 0\).
	\end{enumerate}
\end{theorem}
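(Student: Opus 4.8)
The plan is to enlarge the equivalence class of Theorem~\ref{theo: main1} by closing the loop $\textup{(i)} \Rightarrow \textup{(v)} \Rightarrow \textup{(vi)} \Rightarrow \textup{(i)}$, the equivalences $\textup{(i)} \Leftrightarrow \textup{(ii)} \Leftrightarrow \textup{(iii)}$ being already at our disposal. For $\textup{(i)} \Rightarrow \textup{(v)}$ I would use that, under the FD property, $(T_t)_{t \ge 0}$ is a strongly continuous semigroup on $C_0(J)$ with generator $(S, D(S))$ (the Remark above, citing \cite[Corollary~8.1.2]{EK}). The generation theorem then gives that $u(t) \triangleq T_t g$ is a classical solution for every $g \in D(S)$: it is continuously differentiable, satisfies $u(t) \in D(S)$ and $u'(t) = S u(t)$, and uniqueness follows from the standard device of differentiating $s \mapsto T_{t-s} v(s)$ for a competing solution $v$. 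The implication $\textup{(v)} \Rightarrow \textup{(vi)}$ is immediate, since a classical solution is $C^1$ with continuous $S u = u'$, so integrating from $\varepsilon$ to $t$ yields the identity in (vi).

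The substance lies in $\textup{(vi)} \Rightarrow \textup{(i)}$. I would first record three structural facts about $(S, D(S))$ on $C_0(J)$, all independent of the FD property. First, it satisfies the positive maximum principle: if $f \in D(S)$ attains a positive maximum, then---because $f$ vanishes at the boundary and $J$ is open---the maximum is interior, say at $x_0$, so $f'(x_0) = 0$, $f''(x_0) \le 0$, and hence $S f(x_0) \le 0$; this yields dissipativity, $\|(\lambda - S) f\| \ge \lambda \|f\|$ for all $\lambda > 0$. Second, $(S, D(S))$ is closed, since uniform convergence of $f_n$ and $S f_n$ forces $C^2$-convergence by one-dimensional ODE regularity (using $\sigma \neq 0$). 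Third, $D(S)$ is dense, as it contains the smooth compactly supported functions on $J$.

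Next comes the resolvent identity via a Laplace transform. Fixing $g \in D(S)$ and a mild solution $u$ from (vi), dissipativity makes $t \mapsto \|u(t)\|$ non-increasing on $(0, \infty)$ (via $u' = S u$ there), so $u$ is bounded and $f_\lambda \triangleq \int_0^\infty e^{-\lambda t} u(t)\, dt$ converges in $C_0(J)$ for $\lambda > 0$. Integrating by parts on $[\varepsilon, \infty)$, commuting the closed operator $S$ with the Bochner integral, and letting $\varepsilon \downarrow 0$ (where $u(\varepsilon) \to g$) give $(\lambda - S) f_\lambda = g$. Thus $D(S) \subseteq \on{Range}(\lambda - S)$, and since $D(S)$ is dense while dissipativity together with closedness makes the range closed, $\on{Range}(\lambda - S) = C_0(J)$. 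By the Hille--Yosida--Ray theorem (Lumer--Phillips combined with the positive maximum principle), $(S, D(S))$ generates a Feller semigroup $(\tilde T_t)_{t \ge 0}$ on $C_0(J)$, which is the transition semigroup of a regular diffusion with scale $\s$ and speed $\m$. Because $J$ is open there are no closed boundary points and $\m$ need not be extended, so scale and speed determine the law uniquely; this diffusion therefore coincides with $\X$, whence $(T_t) = (\tilde T_t)$ is strongly continuous on $C_0(J)$ and $\X$ is an FD process.

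I expect the main obstacle to be precisely $\textup{(vi)} \Rightarrow \textup{(i)}$. On the one hand, the integrated (rather than differentiable) notion of solution in (vi) must still be fed into the Laplace-transform identity, which requires care in interchanging $S$ with the integral and in taking the limit $\varepsilon \downarrow 0$ through the closedness of $S$. On the other hand, the closing step identifies the abstractly generated semigroup $(\tilde T_t)$ with the given transition semigroup $(T_t)$ of $\X$, and this is exactly where uniqueness of the diffusion from its scale and speed is indispensable: without it one would only learn that some FD diffusion has generator $(S, D(S))$, not that $\X$ itself is an FD process.
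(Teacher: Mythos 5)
Your reduction of (vi) to the statement that \((S, D(S))\) generates a strongly continuous contraction semigroup on \(C_0(J)\) is correct and is in substance what the paper does: your density, closedness and dissipativity facts are the content of \cite[Proposition 1]{clem86}, and your Laplace-transform/range argument re-proves \cite[Proposition 1.3.4]{EK}, which the paper simply cites. Similarly, your (i) \(\Rightarrow\) (v) \(\Rightarrow\) (vi) matches the paper's route via \cite[Corollary 8.1.2]{EK} and \cite{pazy}. The problem is the closing step.

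The Hille--Yosida--Ray theorem yields only a positive, strongly continuous, \emph{sub-Markovian} contraction semigroup \((\tilde T_t)_{t \geq 0}\) on \(C_0(J)\). Your assertion that \((\tilde T_t)_{t \geq 0}\) ``is the transition semigroup of a regular diffusion with scale \(\s\) and speed \(\m\)'' is exactly what needs to be proved, and nothing in the proposal proves it: one would have to show that the Feller process attached to \((\tilde T_t)_{t \geq 0}\) has continuous paths, is \emph{conservative} (infinite lifetime, no killing), and is a regular diffusion on \(J\) whose scale and speed are \(\s\) and \(\m\); only then can the uniqueness-from-scale-and-speed theorem be invoked. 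Conservativeness is not automatic and is precisely the boundary phenomenon at stake. For instance, the operator \(f \mapsto \tfrac{1}{2} f''\) on \(\{f \in C_0((0,1)) \cap C^2((0,1)) \colon f'' \in C_0((0,1))\}\) generates a perfectly good strongly continuous positive contraction semigroup on \(C_0((0,1))\), namely that of Brownian motion \emph{killed} on exiting \((0,1)\); it corresponds to no conservative diffusion on the open interval at all. So ``generated by a second order operator of this form'' does not imply ``transition semigroup of a conservative diffusion,'' and you cannot appeal to uniqueness of the law given \((\s, \m)\) before killing has been ruled out. Repairing this would require real work, e.g.\ constructing the process from \((\tilde T_t)_{t\geq 0}\), proving path continuity up to the lifetime from locality of the generator, identifying it as a solution of the local martingale problem for \(S\), and invoking uniqueness in law \emph{up to explosion} to compare its lifetime with that of \(\X\) --- none of which is sketched, and which you misdiagnose as a mere application of the known scale-and-speed uniqueness theorem.

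The paper closes the loop without ever identifying \((\tilde T_t)_{t \geq 0}\) with \((T_t)_{t \geq 0}\): from the generation property it extracts, following \cite[Lemma 3]{clem86}, two positive monotone solutions \(u_l, u_r\) of \(u = Su\) vanishing at \(l\) and \(r\) respectively; if a boundary were not natural, the construction of Lemma \ref{lem: func} produces a positive monotone solution which does \emph{not} vanish there, contradicting the uniqueness (up to positive constants) of positive monotone solutions from \cite[p. 129]{itokean74}. This yields (vi) \(\Rightarrow\) (iii) directly, and the FD property of \(\X\) then comes for free from Theorem \ref{theo: main1}. Substituting an argument of this kind (or the heavy probabilistic identification above) for your unproven sentence is what your proof is missing.
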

\begin{proof}
	If (iii) holds, \cite[Corollary 8.1.2]{EK} and \cite[Theorem 4.1.3]{pazy} yield (v). Obviously, (v) implies (vi). Suppose that (vi) holds. As \(C_c^2(J)\) is dense in \(C_0(J)\) and \(C_c^2(J) \subset D(S)\), the operator \((S, D(S))\) is densely defined. Furthermore, it follows from \cite[Proposition~1]{clem86} that \((S, D(S))\) is closed and dissipative. Hence, (vi) and \cite[Proposition 1.3.4]{EK} yield that \((S, D(S))\) is the generator of a strongly continuous semigroup on \(C_0(J)\). Now, it follows verbatim as in the proof of \cite[Lemma 3]{clem86} that there exist two positive monotone solutions \(u_l\) and \(u_r\) to \(u = S u\) such that \(\lim_{x \to l} u_l (x) = \lim_{x \to r} u_r (x) = 0\). As in the proof of Lemma~\ref{lem: func} below, if \(l\) is not natural then there exists a positive increasing solution \(u^*_l\) to \(u = Su\) with \(\lim_{x \to l} u^*_l (x) > 0\). However, since \(u^*_l = c u_l\) for \(c > 0\) (see \cite[p. 129]{itokean74}), this yields a contradiction. The same argument shows that \(r\) is natural. The proof is complete.
\end{proof}

It is interesting to observe that for the Cauchy problem from (iv) uniqueness fails in case (i) -- (iii) from Theorem \ref{theo: main1} fail, see the proof of \cite[Theorem 2]{bay2010}. In other words, existence is not the decisive property in Theorem~\ref{theo: bay repeat}. This is quite different for the Cauchy problem from~(v). In case it has a solution (for all initial data), then (vi) holds and (i) -- (iii) from Theorem~\ref{theo: main1} hold, too.

\section{Proof of Theorem \ref{theo: main1}} \label{sec: pf}
As the scale function \(\s\) is continuous and strictly increasing, \(\s \colon J \to \s(J)\) is a homeomorphism and, by virtue of \cite[Exercise VII.3.18]{RY}, \(\s (\X)\) is a regular diffusion with state space \(J^* \triangleq \s(J)\), scale function \(\on{Id}\) and speed measure \(\m \circ \s^{-1}\).
Note the following implications: If \(f \in C_0 (J)\) then \(f \circ \s^{-1} \in C_0 (J^*)\), and if \(f \in C_0 (J^*)\) then \(f \circ \s \in C_0 (J)\). Thus, \(\X\) and \(\s(\X)\) are simultaneously  FD processes.
With this observation in mind, we can and will w.l.o.g. assume that \(\X\) is on natural scale, i.e. \(\s = \on{Id}\).

\begin{lemma}\label{lem: main1}
	\begin{enumerate}
		\item[\textup{(i)}]
		\(\X\) is an FD process if and only if 
		\begin{align}\label{eq: equiv fd}
			\lim_{x \to b} \E_x \big[ e^{- \alpha \t_y} \big] = 0 \text{ for all } y \in J^\circ, \alpha > 0 \text{ and any infinite \(b \in \partial J\)}.\end{align}
		\item[\textup{(ii)}]
		\(\X_{\cdot \wedge \i}\) is a \(\P_x\)-martingale for all \(x \in J^\circ\) if and only if 
		\begin{align}\label{eq: equiv mg}
			\lim_{y \to b} y \E_x \big[ e^{- \alpha \t_y} \big] = 0 \text{ for all } x \in J^\circ, \alpha > 0 \text{ and any infinite \(b \in \partial J\)}.\end{align}
	\end{enumerate}
\end{lemma}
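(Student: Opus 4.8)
The plan is to pass from the semigroup to its resolvent and to read off both equivalences from the Laplace transforms $\E_x[e^{-\alpha\t_y}]$ of the hitting times. The organizing tool is the pair of minimal positive $\alpha$-eigenfunctions of the generator on natural scale, the decreasing $\phi_\alpha$ and the increasing $\psi_\alpha$; optional stopping applied to the bounded local martingales $e^{-\alpha(t\wedge\t_y)}\phi_\alpha(\X_{t\wedge\t_y})$ and $e^{-\alpha(t\wedge\t_y)}\psi_\alpha(\X_{t\wedge\t_y})$ yields $\E_x[e^{-\alpha\t_y}]=\phi_\alpha(x)/\phi_\alpha(y)$ for $x\ge y$ and $\E_x[e^{-\alpha\t_y}]=\psi_\alpha(x)/\psi_\alpha(y)$ for $x\le y$. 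These formulas recast \eqref{eq: equiv fd} as the vanishing $\phi_\alpha(b)=0$ (resp. $\psi_\alpha(b)=0$) of the outer eigenfunction at an infinite upper (resp. lower) boundary, and \eqref{eq: equiv mg} as the superlinear growth $y/\psi_\alpha(y)\to0$ (resp. $y/\phi_\alpha(y)\to0$) of the inner one.

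For part (i) I would use that, as $\X$ is already Feller on $C_b(J)$ and conservative, it is an FD process precisely when its resolvent $R_\alpha$ maps $C_0(J)$ into itself, strong continuity and the identification with $(T_t)$ being the standard Hille--Yosida part via $\alpha R_\alpha\to\on{Id}$. The mechanism is the strong Markov property at the exit time of the support of a test function. For \eqref{eq: equiv fd} $\Rightarrow$ FD, take $f\in C_c(J)$ with $\on{supp}f\subset[m,M]$; a path started at $x>M$ cannot meet $\on{supp}f$ before $\t_M$, so $R_\alpha f(x)=\E_x[e^{-\alpha\t_M}]\,R_\alpha f(M)$, which tends to $0$ as $x$ approaches an infinite upper boundary by \eqref{eq: equiv fd}, and tends to $0$ automatically at a finite open boundary: on natural scale the identity $\u(b)+\v(b)=(b-c)\,\m((c,b])$ shows that $\u(b)$ and $\v(b)$ are finite or infinite together, so a finite open boundary is necessarily natural with $\m((c,b])=\infty$, whence monotonicity of $\phi_\alpha$ forces $\phi_\alpha(b)=0$; the lower boundary is symmetric. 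Density of $C_c(J)$ in $C_0(J)$ together with $\|R_\alpha\|\le1/\alpha$ then gives $R_\alpha(C_0(J))\subseteq C_0(J)$. Conversely, if $\X$ is an FD process I fix $y$, $\alpha$ and an infinite upper boundary $b$, choose $f\in C_c(J)$ with $f\ge0$, $f\not\equiv0$ and $\on{supp}f\subset(l,y)$, and use that a path started at $x>y$ stays above $y$ until $\t_y$ to obtain $R_\alpha f(x)=\E_x[e^{-\alpha\t_y}]\,R_\alpha f(y)$ with $R_\alpha f(y)>0$; letting $x\to b$ and invoking $R_\alpha f\in C_0(J)$ yields \eqref{eq: equiv fd}.

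For part (ii) I would start from the local martingale property of $\X_{\cdot\wedge\i}$, with localizing sequence $\rho_n=\t_{a_n}\wedge\t_{b_n}$ for $a_n\downarrow l$, $b_n\uparrow r$, so that $\X_{\cdot\wedge\i}$ is a true martingale iff $\{\X_{t\wedge\rho_n}\}_n$ is uniformly integrable for each $t$. Stopping the bounded martingale $\X_{\cdot\wedge\rho_n}$ at time $t$ and splitting according to which level is reached first gives $x=b_n\,\P_x(\t_{b_n}\le t\wedge\t_{a_n})+a_n\,\P_x(\t_{a_n}\le t\wedge\t_{b_n})+\E_x[\X_t;\,t<\rho_n]$, from which uniform integrability is seen to be equivalent to the vanishing of the escaped mass $\lim_{y\to b}y\,\P_x(\t_y\le t)=0$ at each infinite boundary. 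The bridge to \eqref{eq: equiv mg} is Markov's inequality $y\,\P_x(\t_y\le t)\le e^{\alpha t}\,y\,\E_x[e^{-\alpha\t_y}]$ for one implication, and the Laplace representation $y\,\E_x[e^{-\alpha\t_y}]=\int_0^\infty \alpha e^{-\alpha t}\,y\,\P_x(\t_y\le t)\,dt$, split at a large time and controlled by the uniform bound $\sup_y y\,\E_x[e^{-\beta\t_y}]<\infty$ for each $\beta>0$ (a consequence of the convexity of $\psi_\beta$), for the other; together they show that the escaped mass vanishes for all $t$ exactly when \eqref{eq: equiv mg} holds.

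The main obstacle I expect is the bookkeeping in part (ii): turning the heuristic ``no escaped mass'' into genuine uniform integrability requires knowing that $\X_t$ is integrable and controlling both boundaries simultaneously, and matching the time-truncated probabilities $\P_x(\t_y\le t\wedge\t_a)$ with the untruncated Laplace transform in \eqref{eq: equiv mg} when the opposite boundary is also infinite. In part (i) the analogous delicate points are the automatic vanishing of $R_\alpha f$ at finite natural boundaries and the verification of strong continuity, both of which I would keep at the level of the eigenfunction monotonicity argument and standard resolvent theory rather than spell out in full.
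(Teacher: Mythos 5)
Your part (i) is sound and is essentially the paper's argument in resolvent clothing: the paper also reduces the converse to showing that \(T_tf\) vanishes at open boundaries via \(\P_x(\X_t<y)\le\P_x(\t_y<t)\le e^{\alpha t}\E_x[e^{-\alpha\t_y}]\), and it also has to handle finite open boundaries separately, exactly as you do, by combining the eigenfunction equation \(d g^+=2\alpha g\,d\m\) with \(\m\)-divergence near the boundary (note your phrase ``monotonicity of \(\phi_\alpha\) forces \(\phi_\alpha(b)=0\)'' hides precisely this ODE argument, and your claim that \(\u(b),\v(b)\) are ``finite or infinite together'' at finite \(b\) is literally false --- exit boundaries have \(\u(b)<\infty=\v(b)\) --- though the conclusion you need is correct because openness already entails \(\u(b)=\infty\), hence \(\m((c,b))=\infty\)). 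Two cosmetic differences: for FD \(\Rightarrow\) \eqref{eq: equiv fd} the paper uses the supermartingale \(e^{-\alpha t}R_\alpha g(\X_t)\) and optional stopping (an inequality) where you use the strong Markov identity for \(R_\alpha f\) with \(f\) supported away from the boundary (an equality; both work); and the paper works at the semigroup level, invoking that a Feller semigroup with \(T_t(C_0(J))\subset C_0(J)\) is automatically strongly continuous, thereby avoiding the Hille--Yosida passage from \(R_\alpha(C_0)\subset C_0\) back to the semigroup, which in your route requires density of the range of \(R_\alpha\) (Laplace-transform uniqueness plus Hahn--Banach) and an identification step that you wave off as ``standard''.

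In part (ii) there is a genuine gap, and it sits exactly at the obstacle you name but do not resolve: integrability of \(\X_t\) when \emph{both} boundaries are infinite. Your equivalence ``martingale \(\Leftrightarrow\) no escaped mass'' needs \(\X_t\in L^1(\P_x)\) as an input for the direction \eqref{eq: equiv mg} \(\Rightarrow\) martingale: vanishing escaped mass only gives \(\P_x(\t_{\pm y}\le t)=o(1/y)\), and \(o(1/y)\) tails are not integrable in \(y\) (think \(1/(y\log y)\)), so they do not imply \(\E_x|\X_t|<\infty\); without integrability, \(\E_x[\X_t\1_{\{t<\rho_n\}}]\to x\) does not yield \(\E_x[\X_t]=x\). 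In the one-sided case Fatou's lemma settles this (the local martingale is bounded below), but for \(J^\circ=\mathbb{R}\) the paper needs a dedicated lemma (Lemma \ref{lem: inte}, Kotani's Lemma 1), proved with an explicit Lyapunov function \(f\) satisfying \(\tfrac12\tfrac{d}{d\m}\tfrac{d^+f}{dx}=g\) and Dynkin's formula. This is a substantive ingredient, not bookkeeping, and your proposal contains no substitute for it. Your second flagged obstacle --- matching \(y\P_x(\t_y\le t\wedge\t_{a_n})\) with the untruncated quantity --- is also real as you set it up, because in the two-sided decomposition \(b_n\P_x(A_n)+a_n\P_x(B_n)\to0\) allows cancellation between the two terms of opposite sign; however, this one is repairable inside your framework: for a true martingale, optional sampling on the closed martingale \((\X_s)_{s\le t}\) gives \(\X_{t\wedge\t_y}=\E_x[\X_t\,|\,\mathcal{F}_{t\wedge\t_y}]\), so \(\{\X_{t\wedge\t_y}\}_y\) is uniformly integrable and \(y\P_x(\t_y\le t)=\E_x[\X_{t\wedge\t_y}\1_{\{\t_y\le t\}}]\to0\) directly, with no truncation at the opposite boundary. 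The paper instead sidesteps both two-sided issues structurally: it stops at \(\t_a\), observes the stopped process is a one-sided regular diffusion with the same boundary behavior at \(\infty\), applies the one-sided case, and passes through the boundary classification of Lemma \ref{lem: main2} to return to \eqref{eq: equiv mg} --- note that the paper's proof of part (ii) is not self-contained but leans on Lemma \ref{lem: main2}, whereas your direct route could avoid that, \emph{provided} you supply the missing integrability lemma.
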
 
Part (i) of Lemma \ref{lem: main1} is a version of \cite[Proposition 3.1]{Azencott1974} and \cite[Remark 1]{criens2020} for our framework. 
In \cite{Azencott1974} the result is shown for multi-dimensional It\^o diffusions with (locally) H\"older coefficients and in \cite{criens2020} it is shown in a general martingale problem framework.
The general idea for its proof given below is borrowed from \cite{Azencott1974}.
The argument in \cite{Azencott1974} for the \emph{only if} implication uses analytic tools. The proof given below borrows the supermartingale argument from \cite{criens2020}. 
Part (ii) can be extracted from \cite{Hulley2009}, although it has not been stated there in this form. Below we give a proof for which we borrow arguments from the proof of \cite[Theorem~3.9]{Hulley2009}. 

As every regular diffusion is already a Feller process, it is an FD process if and only if \(T_t f\) vanishes at infinity for all \(f \in C_0(J)\) and \(t > 0\).
Thus, \(\X\) should be an FD process precisely if it stays some time close to open boundaries. Part (i) of Lemma~\ref{lem: main1} quantifies this intuition. At this point we stress that regular diffusions on natural scale always stay some time close to finite open boundaries. This explains why only the infinite boundaries are mentioned in \eqref{eq: equiv fd}. As we have seen in Section \ref{sec: multi} this is quite different in the multi-dimensional setting.

To get an idea for part (ii), consider \(J = (0, \infty)\) and note that for all~\(y \geq x\) the stopped process \(\X_{\cdot \wedge \t_y}\) is a bounded local \(\P_x\)-martingale and consequently, a \(\P_x\)-martingale. The condition \eqref{eq: equiv mg} can be viewed as a criterion for the uniform \(\P_x\)-integrability of \(\{\X_{t \wedge \t_y} \colon y \geq x\}\) for every \(t > 0\), which is necessary and sufficient for the \(\P_x\)-martingale property of \(\X\). To get an intuition for this, recall the criterion of de la Vall\'ee Poussin: A family \(\Pi\subset L^1\) is uniformly integrable if and only if there exists a convex monotone function \(H \colon \mathbb{R}_+ \to \mathbb{R}_+\) such that \(\sup_{X \in \Pi} \E[H(|X|)] < \infty\) and \(H(x)/x \to \infty\) as \(x \to \infty\). The condition \eqref{eq: equiv mg} mirrors this criterion with \(H (x) = 1/\E_y [ e^{- \alpha \t_x}]\) for \(x > y\).

At first glance \eqref{eq: equiv mg} seems to be stronger than \eqref{eq: equiv fd}. For example, suppose that \(\g (x, y) \triangleq \E_x[e^{- \alpha \t_y}]\) is symmetric in \(x, y \in J^\circ\). Then, \eqref{eq: equiv mg} clearly implies \eqref{eq: equiv fd}. 
It turns out that this situation is quite special: \(\g\) is symmetric if and only if the diffusion \(\X\) behaves like a Brownian motion up to a constant scale factor in the interior of its state space \(J\).\footnote{
	Let \(g_1\) and \(g_2\) be as in the proof of Lemma \ref{lem: main2}. Symmetry of \(\g\) means \(g_1 g_2 = 1\). Let \(B\) be the Wronskian, i.e. \(\text{const.} \equiv B = g^+_1 g_2 - g_1 g^+_2\), see \cite[p. 130]{itokean74}. Using the product rule we obtain
	\[
	0 = (g_1 g_2)^+ = g_1^+ g_2 + g_1 g_2^+ = \begin{cases}2 g_1^+ g_2 - B = 2 g_1^+/g_1 - B,\\
	2 g_1 g^+_2 + B = 2 g^+_2 /g_2 + B,
	\end{cases}
	\]
	which means
	 \(g_1^+ = Bg_1 / 2\) and \(g^+_2 = - B g_2 /2\). 
	 Using these identities, \(d g^+_i = 2 \alpha g_i d \m, i = 1,2,\) and integration by parts, we obtain for all \(a, b \in J^\circ\) with \(a < b\) that
	\begin{align*}
	0 &= \int_{(a, b]} d (g_1 g_2)^+ 
	= \int_{(a, b]} g_1^+ d g_2 + \int_{(a, b]} g_2 d g^+_1 + \int_{(a, b]} g_2^+ d g_1 + \int_{(a, b]} g_1 d g^+_2
	\\&= 2 \int_a^b g^+_1 g_2^+ dx + 4 \alpha \int_{(a, b]} g_1 g_2 d \m = - \frac{B^2}{2} (a - b) + 4 \alpha \m((a, b]).
	\end{align*}
	Consequently, 
	\(\m (dx) = \textup{const. }dx\) on \((J^\circ, \mathcal{B}(J^\circ))\).
	Hence, \(\X\) behaves like a Brownian motion up to a constant scale factor in the interior of \(J\).} 
In case \(\M\) is a Brownian motion, it is easy to show that 
\(
\E_x[e^{- \alpha \t_y}] = e^{- \sqrt{2 \alpha}|x - y|}\) for all \(x, y \in \mathbb{R}
\)
and both \eqref{eq: equiv fd} and \eqref{eq: equiv mg} are satisfied. We have the following general relation:

\begin{lemma}\label{lem: main2}
	\eqref{eq: equiv fd} \(\Leftrightarrow\) \eqref{eq: equiv mg} \(\Leftrightarrow\) all infinite boundaries are natural.
\end{lemma}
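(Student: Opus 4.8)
The plan is to reduce both limit conditions to the boundary asymptotics of the two fundamental solutions of the generalized ODE \(d g^+ = 2\alpha g\, d\m\) on natural scale. Let \(g_1\) be the positive increasing and \(g_2\) the positive decreasing solution (as in the footnote above), so that \(g_1^+\) is nondecreasing and \(\geq 0\), while \(g_2^+\) is nondecreasing and \(\leq 0\). The classical representation of Laplace transforms of hitting times gives \(\E_x[e^{-\alpha \t_y}] = g_1(x)/g_1(y)\) for \(x \leq y\) and \(\E_x[e^{-\alpha \t_y}] = g_2(x)/g_2(y)\) for \(x \geq y\). It suffices to treat a right infinite boundary \(r = \sup J = +\infty\) (the left boundary is symmetric under \(x \mapsto -x\), and if \(J\) has no infinite boundary the claim is vacuous). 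Note that for \(r = +\infty\) one always has \(\u(r) = \infty\): writing \(\u(x) = \int_{(c,x]}(x - y)\,\m(dy)\), for \(x > y_0\) this is \(\geq (x - y_0)\,\m((c, y_0]) \to \infty\) by regularity. Hence ``\(r\) natural'' is equivalent to \(\v(r) = \infty\).

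First I would translate the two limits into statements about \(g_1, g_2\) at \(r\). For fixed \(y\) and \(x \to r\) we are eventually in the regime \(x > y\), so \eqref{eq: equiv fd} holds at \(r\) iff \(g_2(r) := \lim_{x \to r} g_2(x) = 0\). For fixed \(x\) and \(y \to r\) we are eventually in the regime \(y > x\), so \eqref{eq: equiv mg} holds at \(r\) iff \(y/g_1(y) \to 0\), i.e. \(g_1(y)/y \to \infty\). Since \(g_1^+\) is nondecreasing with limit \(g_1^+(r) \in (0, \infty]\), a Ces\`aro argument shows \(g_1(y)/y \to \infty\) iff \(g_1^+(r) = \infty\), equivalently \(\int_{(c,r)} g_1 \, d\m = \infty\).

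Next I would show \(g_2(r) = 0 \Leftrightarrow \v(r) = \infty\). Because \(g_2^+\) is nondecreasing, bounded above by \(0\), and cannot tend to a negative limit without forcing \(g_2 \to -\infty\), one gets \(g_2^+(r) = 0\); integrating \(g_2^+(y) = -2\alpha \int_{(y,r)} g_2 \, d\m\) and applying Fubini yields, for any \(c_0 \in (c, r)\), the identity \(g_2(r) = g_2(c_0) - 2\alpha \int_{(c_0, r)} g_2(z)(z - c_0)\,\m(dz)\). If \(\v(r) = \infty\) and \(g_2(r) = \ell > 0\), then \(g_2 \geq \ell\) makes the integral diverge while the left-hand side is finite, a contradiction; hence \(g_2(r) = 0\). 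If \(\v(r) < \infty\), choosing \(c_0\) close enough to \(r\) makes \(2\alpha \int_{(c_0,r)}(z - c_0)\,\m(dz) < \tfrac12\), whence \(g_2(r) \geq \tfrac12 g_2(c_0) > 0\). This settles \eqref{eq: equiv fd}.

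Finally I would show \(\int_{(c,r)} g_1\, d\m = \infty \Leftrightarrow \v(r) = \infty\), which settles \eqref{eq: equiv mg}. The forward direction is easy: fixing \(c'\) with \(g_1^+(c') > 0\) gives the linear lower bound \(g_1(y) \geq g_1^+(c')(y - c')\), so \(\v(r) = \infty\) forces \(\int g_1\, d\m = \infty\). The converse is the crux and the main obstacle, since a priori \(g_1\) may grow super-linearly. Assuming \(\v(r) < \infty\), one first checks \(\m((c,r)) < \infty\) and \(\int_{(c,r)} y\,\m(dy) < \infty\). Inserting the bound \(g_1(y) \leq g_1(c) + y\, g_1^+(y)\) (from monotonicity of \(g_1^+\)) into \(g_1^+(x) = g_1^+(c) + 2\alpha \int_{(c,x]} g_1 \, d\m\) yields a Gr\"onwall inequality \(g_1^+(x) \leq A + 2\alpha \int_{(c,x]} y\, g_1^+(y)\, \m(dy)\), and the Stieltjes Gr\"onwall lemma gives \(g_1^+(x) \leq A \exp\!\big(2\alpha \int_{(c,r)} y\,\m(dy)\big) < \infty\); hence \(g_1^+(r) < \infty\) and \(\int g_1\, d\m < \infty\). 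The delicate points are the possible atoms of \(\m\) in \(J^\circ\), which force the Gr\"onwall step to be run through the product-integral bound \(\prod (1 + \Delta\mu) \leq \exp(\mu)\), and the bookkeeping that the two hitting-time regimes reassemble into the stated equivalences simultaneously for every \(\alpha > 0\). Combining the three steps, both \eqref{eq: equiv fd} and \eqref{eq: equiv mg} are equivalent, for each \(\alpha > 0\), to \(\v(r) = \infty\), i.e. to \(r\) being natural; applying this at every infinite boundary completes the proof.
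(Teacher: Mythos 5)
Your proposal is correct, and while it shares the paper's skeleton --- working with the increasing/decreasing solutions \(g_1, g_2\) of \(d g^+ = 2\alpha g\, d\m\), translating \eqref{eq: equiv fd} into \(g_2(\infty) = 0\) and \eqref{eq: equiv mg} into \(g_1^+(\infty) = \infty\), and noting that \(\u(\infty) = \infty\) automatically so that naturalness reduces to \(\v(\infty) = \infty\) --- the core implications are established by a genuinely different route. The paper proves a \emph{cycle} \(g_1^+(\infty) = \infty \Rightarrow g_2(\infty) = 0 \Rightarrow \infty \text{ natural} \Rightarrow g_1^+(\infty) = \infty\), using (a) constancy of the Wronskian \(g_2 g_1^+ - g_1 g_2^+\) for the first arrow, (b) a series construction (Lemma \ref{lem: func}, in the spirit of Karatzas--Shreve) of a decreasing solution with positive limit, combined with the uniqueness theorem to identify it with \(c\, g_2\), for the second, and (c) the subdifferential inequality for the third. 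You instead prove the two equivalences \emph{separately and in both directions}: for \(g_2\), your twice-integrated Fubini identity \(g_2(r) = g_2(c_0) - 2\alpha \int_{(c_0,r)} (z - c_0) g_2(z)\, \m(dz)\) together with the choose-\(c_0\)-close-to-\(r\) trick replaces both the series construction and the appeal to the uniqueness theorem (a real simplification); for \(g_1\), the direction \(\v(r) < \infty \Rightarrow g_1^+(r) < \infty\) is handled by a Stieltjes Gr\"onwall bound, which is exactly the step the paper's cycle avoids by detouring through the Wronskian. The trade-off: the paper gets one implication essentially for free from classical facts but needs three separate tools; your argument is more self-contained and quantitative, at the cost of the Gr\"onwall step, where you correctly flag that atoms of \(\m\) force the product-integral form \(\prod(1 + \Delta\mu) \leq e^{\mu}\) of the lemma. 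A curious structural consequence is that in your proof the equivalence of \eqref{eq: equiv fd} and \eqref{eq: equiv mg} never appears as a direct implication between the two conditions --- it emerges only through their common identification with \(\v(r) = \infty\) --- whereas in the paper the Wronskian provides a direct bridge \(g_2 g_1^+ \leq B\) between them.
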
 

Lemma \ref{lem: main2} shows that \(\X\) approaches infinite boundaries slow enough to be a martingale precisely when it needs long enough to get away from them to be an FD process. This connection seems to be a surprising coincidence. Lemma \ref{lem: main2} is known in different formulations, see \cite[Propositions 3.12 and 3.13]{Hulley2009}, \cite[Table 1, p. 130]{itokean74}, \cite[Lemma~3]{Kotani2006} or \cite[Theorem 2.2]{urusov17}.  Below we give a complete proof, which borrows ideas from these references. We think that its analytic character supports the impression that the equivalence of the FD and the martingale property is quite surprising.

\begin{proof}[Proof of Theorem \ref{theo: main1}]
	Lemmata \ref{lem: main1} and \ref{lem: main2} imply Theorem \ref{theo: main1}.
\end{proof}

\begin{proof}[Proof of Lemma \ref{lem: main2}]
	Fix \(\alpha > 0\) and a reference point \(y \in J^\circ\). Using the notation of It\^o and McKean (\cite[pp. 128]{itokean74}), for \(x \in J^\circ\) we define 
	\begin{align*}
		g_1 (x) &\triangleq \begin{cases} \E_x \big[ e^{- \alpha \t_y} \big], & x \leq y,\\
			1/\E_y \big[ e^{- \alpha \t_x}\big],& y < x,\end{cases}
		\\
		g_2 (x) &\triangleq \begin{cases}1/\E_y \big[ e^{- \alpha \t_x}\big],& x \leq y,
			\\\E_x \big[ e^{- \alpha \t_y} \big], & y < x.\end{cases}
	\end{align*}
	It is well-known (\cite[Proposition V.50.3]{RW2}) that \(g_1\) and \(g_2\) are strictly convex, continuous, strictly monotone, and positive and finite (throughout \(J^\circ\)). More precisely, \(g_1\) is strictly increasing and \(g_2\) is strictly decreasing. Furthermore, \(g_1\) and \(g_2\) both solve the differential equation
	\[
	\frac{1}{2 \alpha} \frac{d}{d\m} \frac{d^+ g}{dx}  = g,
	\]
	that is for \(z, y \in J^\circ\) with \(z < y\)
	\[
	\frac{d^+ g}{dx} (y) - \frac{d^+ g}{dx} (z) = 2 \alpha \int_{(z, y]} g (u) \m (du).
	\]
	
	\noindent
	\emph{Case 1: \(\infty\) is a boundary point of \(J\).}
	Clearly, for \(b = \infty\) the property \eqref{eq: equiv fd} means that \(g_2 (\infty) \triangleq \lim_{x \to \infty} g_2 (x) = 0\), and  \eqref{eq: equiv mg} means that \begin{align}\label{eq: eq1}\lim_{x \to \infty} \frac{x}{g_1 (x)} = 0.\end{align}
	We now translate \eqref{eq: eq1} to a property of \(g^+_1 \triangleq d^+ g_1/dx\).
	As \(g_1\) is convex, we have 
	\begin{align*}
		\frac{g_1 (x) - g_1(z)}{x - z} \leq g^+_1 (x), \quad x, z \in J^\circ, x > z,
	\end{align*}
	which shows that
	\eqref{eq: eq1} implies \(g^+_1 (\infty) \triangleq \lim_{x \to \infty} g^+_1 (x) = \infty.
	\)
	Conversely, L'Hopital's rule (see \cite[Theorem 3]{Vyborny89} for a suitable version with right derivatives) yields that \eqref{eq: eq1} is implied by
	\(
	g^+_1 (\infty) = \infty\).
	Thus,  \eqref{eq: eq1} is equivalent to \(g_1^+ (\infty) = \infty\). We claim the following:
	\begin{align}\label{eq: imp}
		g^+_1 (\infty) = \infty \ \Rightarrow \ g_2 (\infty) = 0 \ \Rightarrow \ \infty\text{ is natural}\ \Rightarrow \ g^+_1 (\infty) = \infty.
	\end{align}
	These implications yield the equivalences in Lemma \ref{lem: main2} for the boundary point \(\infty\).
	\\
	
	\noindent
	\emph{Proof of 1st implication in \eqref{eq: imp}:} By \cite[Theorem V.50.7]{RW2} (or \cite[p. 130]{itokean74}), the Wronskian is constant, i.e.
	\(
	g_2 g^+_1 - g_1 g^+_2 \equiv \text{constant} \triangleq B.
	\)
	Now, \(g_2g_1^+ \leq B\) shows that \(g_1^+ (\infty) = \infty \Rightarrow g_2 (\infty) = 0\).
	\\
	
	\noindent
	\emph{Proof of 2nd implication in \eqref{eq: imp}:} 
	\begin{lemma} \label{lem: func}
		If \(\infty\) is not natural, then there exists a continuous and decreasing function \(g \colon J^\circ \to [1, \infty)\) such that \(	\frac{1}{2 \alpha} \frac{d}{d\m} \frac{d^+g}{dx}  =  g\) and \(\lim_{x \to \infty} g(x) \triangleq g (\infty) = 1.\)
	\end{lemma}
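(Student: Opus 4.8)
The plan is to use that an infinite boundary point can only be entrance or natural, so that \(\infty\) being \emph{not} natural forces it to be an \emph{entrance} boundary. Since we work on natural scale (\(\s=\on{Id}\)), the quantity \(\u(\infty)=\infty\) is automatic, and the genuine content of the hypothesis is the tail bound
\[
\v(\infty)=\int_{(c,\infty)}(u-c)\,\m(du)<\infty.
\]
This is the only structural input I would use. Writing \(g^+\triangleq d^+g/dx\), a decreasing solution that stays bounded below must have \(g^+(\infty)=0\); integrating the equation \(dg^+=2\alpha\,g\,d\m\) twice from \(\infty\) down to \(x\) and imposing \(g^+(\infty)=0\) and \(g(\infty)=1\) then formally produces the Volterra-type integral equation
\[
g(x)=1+2\alpha\int_{(x,\infty)}(u-x)\,g(u)\,\m(du),\qquad x\in J^\circ.
\]
I would take this equation as the object to solve and only afterwards verify that its solution really solves the generalized ODE.

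To solve it, set \(\Phi(x)\triangleq\int_{(x,\infty)}(u-x)\,\m(du)\) and \((Kh)(x)\triangleq\int_{(x,\infty)}(u-x)\,h(u)\,\m(du)\), so the equation reads \(g=1+2\alpha Kg\). I would construct the solution by the monotone Picard iteration \(g_0\equiv1\), \(g_{n+1}=1+2\alpha Kg_n\), i.e. \(g=\sum_{n\ge0}(2\alpha)^nK^n\1\), the partial sums increasing because \(K\) preserves positivity. The key estimate is
\[
(K^n\1)(x)\le\frac{\Phi(x)^n}{n!},
\]
which I would obtain by writing \((K^n\1)(x)\) as an integral over the ordered simplex \(x<x_1<\dots<x_n\) of \(\prod_{i=1}^n(x_i-x_{i-1})\) (with \(x_0=x\)), bounding each factor by \((x_i-x_{i-1})\le(x_i-x)\), and using that the integral of the resulting \emph{symmetric} integrand over the simplex is \(1/n!\) times its integral \(\Phi(x)^n\) over the full product. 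Hence \(g(x)\le e^{2\alpha\Phi(x)}\), and monotone convergence lets me pass to the limit in \(g_{n+1}=1+2\alpha Kg_n\) to obtain \(g=1+2\alpha Kg\).

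I expect the main obstacle to be the global finiteness of \(\Phi\) on \(J^\circ\), which underpins the whole construction. For \(x\ge c\) one has \(\Phi(x)\le\v(\infty)<\infty\) directly, and \(\Phi\) is decreasing with \(\Phi(x)\to0\) as \(x\to\infty\) by dominated convergence. For \(x<c\) the mass near \(c\) is not controlled by \(\v(\infty)\), so I would first note that \(\v(\infty)<\infty\) together with the Radon property forces \(\m((c,\infty))<\infty\): split off \((c,c+1]\), which is finite by the Radon property, and bound the remaining tail by \(\v(\infty)\) since \(u-c\ge1\) there. This yields \(\Phi(x)<\infty\) for every \(x\in J^\circ\), making \(g\) finite, and uniformly bounded on \([c,\infty)\) because \(\Phi\) is.

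It then remains to read off the asserted properties from \(g=1+2\alpha Kg\): positivity \(g\ge1\) is immediate; continuity follows since \(x\mapsto\int_{J^\circ}(u-x)^+g(u)\,\m(du)\) is continuous by dominated convergence; monotonicity follows because for \(x_1<x_2\) both the weight \((u-x)\) and the domain \((x,\infty)\) shrink with \(x\), giving \(g(x_1)\ge g(x_2)\); and \(g(\infty)=1\) follows from \((Kg)(x)\le(\sup_{[c,\infty)}g)\,\Phi(x)\to0\). Finally, differentiating the integral equation gives \(g^+(x)=-2\alpha\int_{(x,\infty)}g(u)\,\m(du)\), and differencing this produces \(g^+(y)-g^+(z)=2\alpha\int_{(z,y]}g(u)\,\m(du)\) for \(z<y\), which is exactly \(\frac{1}{2\alpha}\frac{d}{d\m}\frac{d^+g}{dx}=g\) in the integral form used in the paper. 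Apart from the simplex estimate and the global finiteness of \(\Phi\), everything else is routine bookkeeping.
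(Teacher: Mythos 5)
Your proposal is correct and follows essentially the same route as the paper: both construct \(g\) as the Neumann series \(\sum_{n\ge 0}(2\alpha)^n K^n\1\) (the paper's \(u_n=K^n\1\) is exactly your iterate), both hinge on the factorial bound \(K^n\1\le \Phi^n/n!\) with \(\Phi=u_1\), and both read off \(g(\infty)=1\) and the generalized ODE from the resulting sandwich \(1\le g\le e^{2\alpha\Phi}\) and the identity \(g^+=-2\alpha\int_{(\cdot,\infty)}g\,d\m\). The only cosmetic differences are that you prove the factorial bound by symmetrizing over the ordered simplex where the paper uses induction and the identity \(\m((y,\infty))\,dy=-\,du_1(y)\), and that you differentiate the fixed-point equation directly rather than the series term by term; your explicit treatment of the finiteness of \(\Phi\) for \(x<c\) makes precise a point the paper leaves implicit.
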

	\begin{proof}
		We mimic the proof of \cite[Lemma 5.5.26]{KaraShre} (see also \cite[Section II.2]{mandl}). 		Assume that \(\infty\) is not natural. 
		Set \(u_0 = 1\) and 
		\[
		u_n (x) \triangleq \int_x^\infty \int_{(y, \infty)} u_{n - 1}(z) \m (dz) dy = \int_{(x, \infty)} (z - x) u_{n - 1} (z) \m (dz),
		\]
		for \(x \in J^\circ\) and \(n = 1, 2, \dots.\)
		We stress that \(u_1, u_2, \dots\) are well-defined, continuous and decreasing, because \(\infty\) is \emph{not} natural. 
		Induction shows that
		\begin{align}\label{eq: first bound}
			u_n \leq \frac{u_1^n}{n!}, \quad n = 1, 2, \dots.
		\end{align}
		Indeed, the case \(n = 1\) is clear and if the inequality holds for \(n \in \mathbb{N}\), then
		\begin{align*}
			u_{n + 1} &= \int_\cdot^\infty \int_{(y, \infty)} u_n (z) \m(dz) dy 
			\leq \frac{1}{n!}\int_\cdot^\infty u_1^n(y) \m ((y, \infty)) dy 
			\\&= \frac{- 1}{n!} \int_\cdot^\infty u_1^n(y) u_1 (dy)
			= \frac{u^{n + 1}_1}{(n + 1)!}.
		\end{align*}
		Using \eqref{eq: first bound}, we also get
		\begin{align} \label{eq: second bound}
			\Big|\frac{d^+ u_n}{dx}\Big| \leq \frac{u_1^{n - 1}}{(n - 1)!} \m ((\hspace{0.05cm}\cdot\hspace{0.05cm}, \infty)), \quad n = 1, 2, \dots.
		\end{align}
		Thanks again to \eqref{eq: first bound}, 
		\(
		g \triangleq \sum_{n = 0}^\infty (2\alpha)^n u_n\)
		defines a continuous and decreasing function.
		We also see that
		\(
		1 + 2 \alpha u_1 \leq g \leq e^{2 \alpha u_1}
		\)
		and consequently, \(g (\infty) = 1\). Moreover, using~\eqref{eq: second bound}, we get
		\begin{align*}
			\frac{d^+ g}{dx} &= \sum_{n = 1}^\infty (2\alpha)^n \frac{d^+ u_n}{dx} = \sum_{n = 1}^\infty (2 \alpha)^n (- 1) \int_{(\hspace{0.02cm}\cdot\hspace{0.02cm}, \infty)} u_{n - 1} (z) \m (dz)
			\\&= - 2\alpha \int_{(\hspace{0.02cm}\cdot\hspace{0.02cm}, \infty)} \sum_{n = 0}^\infty (2 \alpha)^n u_{n} (z) \m (dz) 
			= - 2 \alpha \int_{(\hspace{0.02cm}\cdot\hspace{0.02cm}, \infty)} g (z) \m (dz).
		\end{align*}
		For \(y, z \in J^\circ\) with \(y < z\) this shows that
		\[
		\frac{d^+ g}{dx} (z) - \frac{d^+ g}{dx} (y) = 2 \alpha \int_{(y, z]} g (x) \m (dx),
		\]
		which is nothing else than
		\(
		\frac{1}{2 \alpha} \frac{d}{d \m} \frac{d^+ g}{dx} = g.
		\)
		In summary, \(g\) has all claimed properties.
	\end{proof}
	Assume that \(\infty\) is not natural and take \(g\) as in Lemma \ref{lem: func}. Then, the uniqueness theorem \cite[Theorem~16.69]{breiman1968probability} implies that \(g = c\hspace{0.05cm} g_2\) for a constant \(c > 0\). Thus, \(g_2 (\infty) > 0\) and we conclude that \(g_2(\infty) = 0\) \(\Rightarrow\) \(\infty\) is natural.
	\\
	
	\noindent
	\emph{Proof of 3rd implication in \eqref{eq: imp}:}
	Assume that \(g^+_1 (\infty) < \infty\). Then, using the subdifferential inequality, we obtain for every \(a \in J^\circ\) that
	\begin{align*}
		\int_{(a, \infty)} (z - a) \m(dz) &\leq \int_{(a, \infty)} \frac{g_1 (z) \m(dz)}{g^+_1(a)} 
		= \frac{g_1^+ (\infty) - g_1^+ (a)}{2 \alpha g^+_1 (a)} < \infty.
	\end{align*}
	Consequently, \(\infty\) cannot be natural. We conclude that \(\infty\) is natural \(\Rightarrow\) \(g^+_1 (\infty)= \infty\).
	\\
	
	\noindent
	\emph{Case 2: \(- \infty\) is a boundary point of \(J\).} In this case \eqref{eq: equiv fd} means that \(g_1 (- \infty) \triangleq \lim_{x \to - \infty} g_1 (x) = 0\), and \eqref{eq: equiv mg} means that \(\lim_{x \to - \infty} \frac{x}{g_2(x)} = 0\). As in the previous case, we see that 
	\[
	\lim_{x \to - \infty} \frac{x}{g_2(x)} = 0 \quad \Leftrightarrow \quad g_2^+ (- \infty) = -\infty.
	\]
	The following implications also follow as in the previous case:
	\[
	g^+_2 (-\infty) = -\infty \ \Rightarrow \ g_1 (- \infty) = 0 \ \Rightarrow \ - \infty\text{ is natural}\ \Rightarrow \ g^+_2 (- \infty) =- \infty.
	\]
	Hence, the equivalence in Lemma \ref{lem: main2} holds for the boundary point \(-\infty\).
	The proof is complete. 
\end{proof}
\begin{proof}[Proof of Lemma \ref{lem: main1} (i)]
	First, assume that \(\X\) is an FD process. Fix \(y \in J^\circ, \alpha > 0\) and let \(g \in C_0 (J)\) be such that \(g (J) \subset [0, 1]\) and \(g (y) = 1\). Furthermore, define
	\[
	R_\alpha g \triangleq \int_0^\infty e^{- \alpha s} T_s g \hspace{0.05cm} ds.
	\]
	It is well-known (\cite[Section III.2.6]{RY}) that \(R_\alpha g \in C_0 (J)\) and that \(e^{- \alpha \cdot} R_\alpha g (\X)\) is a \(\P_x\)-supermartingale for every \(x \in J\).
	Moreover, as \(t \mapsto T_t\) is continuous in the origin, we also see that \(R_\alpha g (y) > 0\).
	The optional stopping theorem yields that 
	\[
	R_\alpha g (x) \geq \E_x \big[ e^{- \alpha \t_y} R_\alpha g (\X_{\t_y}) \1_{\{\t_y < \infty\}}\big] = R_\alpha g (y) \E_x \big[ e^{- \alpha \t_y} \big].
	\]
	As \(R_\alpha g \in C_0(J)\), this inequality implies \eqref{eq: equiv fd}.
	
	Conversely, assume that \eqref{eq: equiv fd} holds. 
	By \cite[Proposition III.2.4]{RY}, \(\X\) is an FD process if and only if \(
	T_t (C_0(J)) \subset C_0(J)
	\)
	for all \(t > 0\). As \(\X\) is a Feller process, we only need to show that \(T_t f\) vanishes at infinity for every \(f \in C_0(J)\) and~\(t > 0\). Of course, for this property we can restrict our attention to open boundaries.
	
	Denote the left boundary point of \(J\) by \(l\) and the right boundary point by \(r\). Let \(g_1\) be as in the proof of Lemma \ref{lem: main2} and assume that \(l\) is open and finite.
	For \(l < x < r\) a little calculus yields that
	\begin{align*}
		\infty > (x - l) g^+_1 (x) + g_1 (l) - g_1 (x) 
		&= \int_l^x \big(g^+_1 (x) - g^+_1 (z)\big) dz
		\\&= \int_l^x \int_{(z, x]} 2 \alpha g_1 (u) \m(du) dz
		\\&\geq 2 \alpha g_1 (l) \int_l^x \m ((z, x]) dz.
	\end{align*}
	As \(l\) is open (i.e. \(\u (r) = \infty\), where \(\u\) is as in \eqref{eq: u func}), this inequality yields \(g_1(l) = 0\).
	Similarly, \(g_2 (r) = 0\) holds in case \(r\) is open and finite. In summary, \eqref{eq: equiv fd} holds for all open boundaries irrespective whether these are finite or infinite.
	
	Take \(f \in C_0(J)\) and \(\alpha, \varepsilon> 0\). 
	For \(l < y < x < r\) we have
	\begin{align}\label{eq: simp ineq}
		\P_x(\X_\alpha < y) \leq \P_x (\t_y < \alpha) \leq e^{\alpha^2} \E_x \big[ e^{- \alpha \t_y} \big].
	\end{align}
	Suppose that the right boundary \(r\) is open. Then, as \(f \in C_0(J)\), there exists a \(z \in J^\circ\) such that \(|f (x)| \leq \varepsilon\) for all \(z \leq x\). Now, taking \eqref{eq: equiv fd} and \eqref{eq: simp ineq} into account, we obtain
	\begin{equation*}\begin{split}
			| T_\alpha f (x) | & \leq \E_x \big[ | f(\X_\alpha) | \1_{\{\X_\alpha \geq z\}} \big] + \E_x \big[ | f(\X_\alpha) | \1_{\{\X_\alpha < z\}} \big] \\ &\leq \varepsilon + \|f\|_\infty \P_x(\X_\alpha < z) \to \varepsilon \text{ as } x \to r.
	\end{split}\end{equation*}
	This implies that \(T_\alpha f (x) \to 0\) as \(x \to r\).
	
	Similarly, when the left boundary \(l\) is open it follows that \(T_\alpha f (x) \to 0\) as \(x \to l\). We conclude that \(T_\alpha f\) vanishes at infinity. The proof is complete.
\end{proof}
\begin{proof}[Proof of Lemma \ref{lem: main1} (ii)]
	By Lemma \ref{lem: main2}, \eqref{eq: equiv mg} holds if and only if all infinite boundary points are natural. Thus, \eqref{eq: equiv mg} holds for the diffusions \(\X\) and \(\X_{\cdot \wedge \i}\) simultaneously. Consequently, we can w.l.o.g. assume that \(\X = \X_{\cdot \wedge \i}\).
	
	Let \(l\) be the left boundary point of \(J\) and let \(r\) be the right boundary point. In case \(- \infty < l < r < + \infty\) the process \(\X= \X_{\cdot \wedge \i}\) is bounded and the claim of Lemma~\ref{lem: main1} (ii) is obvious. Below we distinguish between the cases where \(- \infty < l < r = \infty\) and \(- \infty = l < r = \infty\). The remaining case \(- \infty = l < r < \infty\) is similar to the former.

	If \(\X_t \in L^1(\P_x)\) for all \(t > 0\), then the Markov property yields that 
	\[
	\P_x\text{-a.s.} \quad	\E_x \big[ \X_{t} | \mathcal{F}_{s}\big] = \E_{\X_{s}} \big[ \X_{t - s} \big], \quad 0 \leq s < t.
	\]
	Hence, as martingales always have constant expectation, we have the following:
	\begin{lemma} \label{lem: chara mg}
		\(\X\) is a \(\P_x\)-martingale for all \(x \in J^\circ\) if and only if \(\X_t \in L^1(\P_x)\) and
		\(
		\E_x [\X_{t} ] = x\) for all \(x \in J^\circ\) and \(t > 0\). 
	\end{lemma}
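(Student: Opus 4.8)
The plan is to read off both directions from the Markov-property identity \(\E_x[\X_t \mid \mathcal{F}_s] = \E_{\X_s}[\X_{t-s}]\) displayed just above the statement, which is available under the integrability hypothesis. The \emph{only if} direction needs no work: if \(\X\) is a \(\P_x\)-martingale, then \(\X_t \in L^1(\P_x)\) by definition, and since martingales have constant expectation while \(\X_0 = x\) holds \(\P_x\)-a.s., we get \(\E_x[\X_t] = \E_x[\X_0] = x\) for every \(t > 0\).

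For the \emph{if} direction I would fix \(x \in J^\circ\) and \(0 \le s < t\) and analyse the right-hand side of the displayed identity, namely the function \(z \mapsto \E_z[\X_{t-s}]\) evaluated at \(z = \X_s\). By hypothesis this function equals the identity map on \(J^\circ\), so on the event \(\{\X_s \in J^\circ\}\) we obtain \(\E_{\X_s}[\X_{t-s}] = \X_s\) directly, and together with the Markov identity this already yields \(\E_x[\X_t \mid \mathcal{F}_s] = \X_s\) there.

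The only point requiring care, and the step I expect to be the main (if modest) obstacle, is the complementary event \(\{\X_s \notin J^\circ\}\), on which the hypothesis does not literally apply. Here the standing reduction \(\X = \X_{\cdot \wedge \i}\) saves the day: on this event the path has already been absorbed at a closed boundary point \(b \in \partial J \cap J\), and under \(\P_b\) one has \(\i = 0\), whence \(\X_{\cdot \wedge \i} \equiv b\) and so \(\E_b[\X_{t-s}] = b = \X_s\) trivially. Consequently \(\E_{\X_s}[\X_{t-s}] = \X_s\) on all of \(\Omega\), and the displayed identity delivers the martingale property \(\E_x[\X_t \mid \mathcal{F}_s] = \X_s\) for every \(x \in J^\circ\). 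The required integrability of \(\X_s\) and \(\X_t\) is part of the hypothesis (and is automatic at closed boundary points, where the path is constant), and \(\X_s\) is \(\mathcal{F}_s\)-measurable as usual, so nothing further is needed.
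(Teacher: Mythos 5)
Your proof is correct and takes essentially the same route as the paper, which deduces the lemma directly from the displayed Markov-property identity \(\E_x[\X_t \mid \mathcal{F}_s] = \E_{\X_s}[\X_{t-s}]\) together with the fact that martingales have constant expectation. Your explicit treatment of the event \(\{\X_s \notin J^\circ\}\) via absorption under the standing reduction \(\X = \X_{\cdot \wedge \i}\) merely fills in a detail that the paper leaves implicit.
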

	In the following we prove that the \emph{if} condition from Lemma \ref{lem: chara mg} is equivalent to~\eqref{eq: equiv mg}.\\
	
	\noindent
	\emph{Case 1: \(- \infty < l < r = \infty\).} Fix \(x \in J^\circ = (l, \infty)\) and \(t > 0\). First of all, \(\X_t \in L^1 (\P_x)\) follows from Fatou's lemma as \(\X\) is a local martingale which is bounded from below. 
	For \(x < y < r = \infty\) the stopped process \(\X_{\cdot \wedge \t_y}\) is \(\P_x\)-a.s. bounded and consequently, a \(\P_x\)-martingale. 
	As \(\X_{t} \in L^1(\P_x)\), the dominated convergence theorem yields that 
	\begin{align*}
		\E_x \big[ \X_{t} \big] &= \lim_{y \to \infty}\E_x \big[ \X_{t} \1_{\{\t_y > t\}} \big] 
		\\&= \lim_{y\to\infty} \big(\E_x \big[ \X_{t \wedge \t_y} \big] -  \E_x \big[ \X_{\t_y} \1_{\{\t_y \leq t\}} \big]\big)
		\\&= x - \lim_{y \to \infty} y\P_x (\t_y \leq t).
	\end{align*}
	Thus, by Lemma \ref{lem: chara mg}, \(\X\) is a \(\P_x\)-martingale for all \(x \in J^\circ\) if and only if
	\[
	\lim_{y \to \infty} y \P_x (\t_y \leq t) = 0\] for all \(x \in J^\circ\) and \(t > 0\). Taking this observation into consideration, the next lemma completes the proof of Lemma \ref{lem: main1} (ii) for the current case.
	\begin{lemma} \label{lem: inte alpha t}
		Let \(x \in J^\circ\). Then, 
		\(
		\lim_{y \to \infty} y \P_x (\t_y \leq t) = 0\) for all \(t > 0\) if and only if \(\lim_{y \to \infty} y \E_x [e^{- \alpha \t_y} ] = 0\) for all \(\alpha > 0\).
	\end{lemma}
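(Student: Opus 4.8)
The plan is to deduce both implications from the elementary identity
\[
\E_x\big[e^{-\alpha\t_y}\big] = \alpha\int_0^\infty e^{-\alpha s}\,\P_x(\t_y\le s)\,ds,
\]
which follows from writing $e^{-\alpha\t_y}=\alpha\int_{\t_y}^\infty e^{-\alpha s}\,ds$ and applying Tonelli's theorem (equivalently, an integration by parts in the Laplace transform of the law of $\t_y$). Multiplying by $y$ recasts both conditions as statements about the single object $y\,\P_x(\t_y\le s)$, which is what links them.

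For the implication from the exponential condition to the probabilistic one I would simply invoke Markov's inequality: for fixed $t>0$ and any $\alpha>0$,
\[
\P_x(\t_y\le t) = \P_x\big(e^{-\alpha\t_y}\ge e^{-\alpha t}\big)\le e^{\alpha t}\,\E_x\big[e^{-\alpha\t_y}\big],
\]
so that $y\,\P_x(\t_y\le t)\le e^{\alpha t}\,y\,\E_x[e^{-\alpha\t_y}]\to 0$. This direction is immediate and needs no integrability input (a single value of $\alpha$ already delivers the conclusion for every $t$).

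The substantial direction is the converse, for which I would multiply the identity by $y$ and pass to the limit under the integral sign in
\[
y\,\E_x\big[e^{-\alpha\t_y}\big] = \alpha\int_0^\infty e^{-\alpha s}\,y\,\P_x(\t_y\le s)\,ds,
\]
the hypothesis guaranteeing that the integrand tends to $0$ pointwise for every $s>0$. The main obstacle is to supply a dominating function, since the crude bound $y\,\P_x(\t_y\le s)\le y$ is useless and the exponential weight alone cannot absorb the factor $y$ in the tail. I would obtain the required uniform estimate from Doob's maximal inequality. In the situation at hand the left boundary $l$ is finite, so the stopped process $\X_{\cdot\wedge\t_y}$ takes values in $[l,y]$ and is a bounded martingale with $\E_x[\X_{s\wedge\t_y}]=x$; hence $(\X_{\cdot\wedge\t_y})^+$ is a nonnegative submartingale. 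Since $\{\t_y\le s\}=\{\sup_{u\le s}\X_{u\wedge\t_y}\ge y\}$ by path-continuity, Doob's inequality yields
\[
y\,\P_x(\t_y\le s)\le\E_x\big[(\X_{s\wedge\t_y})^+\big] = x+\E_x\big[(\X_{s\wedge\t_y})^-\big]\le x+\max(-l,0)=:C,
\]
a bound independent of both $s$ and $y$. Consequently $e^{-\alpha s}\,y\,\P_x(\t_y\le s)\le C\,e^{-\alpha s}$, which is integrable on $(0,\infty)$, and dominated convergence gives $y\,\E_x[e^{-\alpha\t_y}]\to 0$. This completes the proof, the crucial ingredient being the boundedness from below of $\X_{\cdot\wedge\i}$ (available precisely because $l$ is finite in the case where the lemma is applied), which converts Doob's inequality into an $s$-uniform dominating bound.
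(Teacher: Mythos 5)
Your proof is correct and follows essentially the same route as the paper: the same Fubini/Laplace-transform identity, Markov's inequality for the direction from the exponential to the probabilistic condition, and a uniform-in-\((s,y)\) bound on \(y\,\P_x(\t_y\le s)\) combined with dominated convergence for the converse. The only cosmetic difference is that you obtain the uniform bound from Doob's maximal inequality applied to the submartingale \((\X_{\cdot\wedge\t_y})^+\), whereas the paper derives the same estimate directly from the martingale property of the stopped process via \((y-l)\,\P_x(\t_y\le t)=\E_x\big[(\X_{t\wedge\t_y}-l)\1_{\{\t_y\le t\}}\big]\le x-l\); both rest on the same ingredients (boundedness below by the finite left endpoint \(l\) and \(\E_x[\X_{t\wedge\t_y}]=x\)).
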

	\begin{proof} 
		Take \(\alpha > 0\). Fubini's theorem yields that 
		\begin{align*}
			\int_0^\infty e^{- \alpha t} \P_x (\t_y \leq t) dt 
			&= \int \int_u^\infty e^{- \alpha t} dt \hspace{0.03cm} \P_x (\t_y \in du)
			= \tfrac{1}{\alpha} \E_x \big[ e^{- \alpha \t_y} \big].
		\end{align*}
		Furthermore, for every \(y \geq x\) we have 
		\[
		(y - l) \P_x (\t_y \leq t) = \E_x \big[ (\X_{t \wedge \t_y} - l) \1_{\{\t_y \leq t\}} \big] \leq \E_x \big[ \X_{t \wedge \t_y} \big] - l = x - l,
		\]
		which implies
		\(
		|y| \P_x (\t_y \leq t) \leq x - l + |l|.
		\)
		Thus, if \(\lim_{y \to \infty} y\P_x (\t_y \leq t) = 0\) for all \(t > 0\), then the dominated convergence theorem yields
		\[
		\lim_{y \to \infty} y \E_y \big[ e^{- \alpha \t_y} \big] = \lim_{y \to \infty} \int_0^\infty \alpha e^{- \alpha t} y \P_x(\t_y \leq t) dt = 0.
		\]
		This is the \emph{only if} implication. 
		
		Conversely, if \(\lim_{y \to \infty} y \E_x [e^{- \alpha \t_y}] = 0\), then
		\begin{align} \label{eq: one implication} \lim_{y \to \infty} y \P_x(\t_y \leq \alpha) \leq e^{\alpha^2} \lim_{y \to \infty} y \E_x \big[e^{- \alpha \t_y}\big] = 0. \end{align}
		This gives the \emph{if} implication. The proof is complete.
	\end{proof}
	
	\noindent
	\emph{Case 2: \(- \infty = l < r = \infty\).} 
	We start with a version of \cite[Lemma 1]{Kotani2006}:
	\begin{lemma} \label{lem: inte}
		For all \(t > 0\) and \(x \in J = \mathbb{R}\) we have \(\X_t \in L^1(\P_x)\).
	\end{lemma}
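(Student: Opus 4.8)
\textit{Proof proposal for Lemma \ref{lem: inte}.}

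The plan is to dominate \(|\X|\) by a single nonnegative supermartingale with finite initial value, so that integrability of \(\X_t\) drops out of one supermartingale inequality. The natural building blocks are the fundamental solutions \(g_1\) (strictly increasing) and \(g_2\) (strictly decreasing) of \(\frac{1}{2\alpha}\frac{d}{d\m}\frac{d^+g}{dx}=g\) introduced in the proof of Lemma \ref{lem: main2}, now specialized to \(J=\mathbb{R}\); recall that these are strictly convex, strictly monotone, and positive and finite throughout \(J^\circ=\mathbb{R}\). Fix \(\alpha>0\). Since each \(g_i\) solves this ODE, it is standard (see \cite{RW2,itokean74}) that \(e^{-\alpha t}g_i(\X_t)\) is a local \(\P_x\)-martingale for every \(x\in\mathbb{R}\); as \(J=\mathbb{R}\) is open and \(\X\) is conservative, there is no killing inside \(J^\circ\), so this holds on all of \(\mathbb{R}_+\). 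Being nonnegative, \(e^{-\alpha t}g_i(\X_t)\) is then a \(\P_x\)-supermartingale, whence \(\E_x[g_i(\X_t)]\le e^{\alpha t}g_i(x)\) for all \(t>0\) and \(x\in\mathbb{R}\).

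Next I would set \(G\triangleq g_1+g_2>0\). As a sum of two nonnegative local martingales, \(e^{-\alpha t}G(\X_t)\) is again a nonnegative supermartingale, so \(\E_x[G(\X_t)]\le e^{\alpha t}G(x)<\infty\) for all \(t>0\). The decisive point is that \(G\) dominates \(|\cdot|\) up to an affine function. Indeed, strict convexity together with strict monotonicity forces \(g_1^+>0\) and \(g_2^+<0\) everywhere, so the subgradient inequality gives \(g_1(z)\ge g_1(1)+g_1^+(1)(z-1)\) for \(z\ge 1\) and \(g_2(z)\ge g_2(-1)+g_2^+(-1)(z+1)\) for \(z\le -1\), and both right-hand sides grow linearly in \(|z|\). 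Since \(G\) is bounded below by a positive constant on the compact set \([-1,1]\), there are constants \(c>0\) and \(C\ge 0\) with \(G(z)\ge c|z|-C\) for all \(z\in\mathbb{R}\).

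Combining the two estimates yields
\[
c\,\E_x[|\X_t|]-C\le \E_x[G(\X_t)]\le e^{\alpha t}G(x)<\infty,
\]
so \(\X_t\in L^1(\P_x)\) for every \(t>0\) and \(x\in J=\mathbb{R}\), as claimed. I expect the only technical point to be the justification that \(e^{-\alpha t}g_i(\X_t)\) is a genuine local martingale: this is the single place where the generalized second-order operator \(\frac{d}{d\m}\frac{d^+}{dx}\) must be treated via the occupation-time/local-time representation of the speed measure rather than a classical It\^o formula. Everything else---the supermartingale inequality from nonnegativity and the linear lower bound from convexity---is soft and, importantly, uniform across the boundary classification, so no case distinction on whether the infinite boundaries are entrance or natural is required.
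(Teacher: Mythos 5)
Your proof is correct, but it takes a genuinely different route from the paper. The paper (following Kotani) builds an explicit Lyapunov function \(f\) by integrating a compactly supported bump \(g\) twice against the speed measure, so that \(\tfrac{1}{2}\tfrac{d}{d\m}\tfrac{d^+ f}{dx} = g\) is bounded and \(f\) grows linearly; it then applies Dynkin's formula to the diffusion stopped at \(\t_y \wedge \t_{-y}\) (using Freedman's description of the generator of the stopped process) to get \(\E_x[f(\X_{t\wedge\t_y\wedge\t_{-y}})] \le f(x) + t\|g\|_\infty\), and concludes by Fatou's lemma as \(y \to \infty\). You instead recycle the \(\alpha\)-eigenfunctions \(g_1, g_2\) already introduced for Lemma \ref{lem: main2}: the discounted processes \(e^{-\alpha t}g_i(\X_t)\) are nonnegative local martingales, hence supermartingales, which gives \(\E_x[g_i(\X_t)] \le e^{\alpha t}g_i(x) < \infty\) in one line, and strict convexity plus strict monotonicity forces \(g_1 + g_2 \ge c|\cdot| - C\). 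Both steps of yours are sound; in particular \(g_1^+(1) > 0\) does follow (otherwise \(g_1^+\) strictly increasing would force \(g_1^+ < 0\) to the left of \(1\), contradicting monotonicity), and the localization is unproblematic — in fact for \(g_1\) one can use \(\sigma_n = \t_n\) alone, since \(g_1(\X_{t\wedge\t_n}) \le g_1(n)\) makes the stopped discounted process a bounded martingale via the strong Markov identity \(g_1(x) = g_1(n)\E_x[e^{-\alpha\t_n}]\). What each approach buys: yours avoids constructing any new function and replaces Dynkin's formula by a single supermartingale inequality, which is arguably slicker and integrates better with the rest of Section \ref{sec: pf}; the paper's version is more self-contained in that the "technical point" you flag (that discounted eigenfunctions are local martingales) is exactly the kind of fact the paper proves from scratch via Freedman's generator characterization — so the underlying machinery is comparable, just packaged differently. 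Either proof also correctly requires no case distinction on the nature of the infinite boundaries.
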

	For completeness, we provide a proof for Lemma \ref{lem: inte} at the end of this section.
	Suppose now that \eqref{eq: equiv mg} holds and take \(x \in \mathbb{R}\). 
	As in \eqref{eq: one implication}, we obtain
	\[
	\lim_{y \to \infty} y \P_x (\t_y \leq t) = \lim_{y \to \infty} y \P_x (\t_{- y} \leq t) = 0, \quad t > 0.
	\]
	Now, by virtue of Lemma \ref{lem: inte}, the dominated convergence theorem yields that
	\begin{align*}
		\E_x \big[ \X_{t} \big] &= \lim_{y \to \infty} \E_x \big[ \X_t \1_{\{\t_y \wedge \t_{-y} > t\}}\big]
		\\
		&= \lim_{y \to \infty} \big(\E_x \big[ \X_{t \wedge \t_y \wedge \t_{y-}} \big] - \E_x \big[ \X_{\t_y \wedge \t_{-y}} \1_{\{\t_y \wedge \t_{-y} \leq t\}}\big] \big)
		\\
		&= x - \lim_{y \to \infty} \big(y \P_x (\t_y \leq t, \t_y < \t_{-y}) - y \P_x (\t_{-y} \leq t, \t_{-y} < \t_y)\big) 
		\\&= x
	\end{align*}
	for all \(t > 0\). Hence, the process \(\X\) is a \(\P_x\)-martingale by Lemma \ref{lem: chara mg}.
	
	Conversely, assume that \(\X\) is a \(\P_x\)-martingale for all \(x \in \mathbb{R}\) and take \(a \in \mathbb{R}\). By the optional stopping theorem, the stopped process \(\X_{\cdot \wedge \t_a}\) is a \(\P_x\)-martingale for all \(x \in \mathbb{R}\). For suitable initial values, \(\X_{\cdot \wedge \t_a}\) is a diffusion with state space \([a, \infty)\) (or with state space \((- \infty, a]\)). Note that \(\X_{\cdot \wedge \t_a}\) has the same boundary behavior at~\(\infty\) (or at~\(- \infty\)) as the unstopped process \(\X\), see \cite[Section 3.9, pp. 102 -- 105]{itokean74}. Now, the previous case and Lemma \ref{lem: main2} yield that \(\infty\) and \(- \infty\) are natural. Hence, again by Lemma~\ref{lem: main2}, \eqref{eq: equiv mg} holds and the proof is complete. 
\end{proof}

\begin{proof}[Proof of Lemma \ref{lem: inte}]
	We use a suitable Lyapunov function. Such a function was also used in the proof of \cite[Lemma 1]{Kotani2006}, but it was not given explicitly. Let \(- \infty < a < 0 < b < \infty\) and let \(g \colon \mathbb{R} \to [0, 1]\) be a continuous function such that \(g \equiv 0\) off \([a, b]\) and \(g > 0\) on \([a, b]\).
	Furthermore, define 
	\[
	f (x) \triangleq \begin{cases} \displaystyle \int_0^x \int_{(0, y]} 2g(z) \m(dz) dy, & \text{for } x \geq 0,\\
		\vspace{-0.4cm}
		\\ \displaystyle \int_x^0 \int_{(y, 0]} 2 g(z) \m(dz) dy,& \text{for } x \leq 0.\end{cases}
	\]
	We note that
	\(
	\frac{1}{2} \frac{d}{d \m}\frac{d^+f}{dx} = g,
	\) \(\lim_{x \to \infty} f(x)/x > 0\) and \(\lim_{x \to - \infty} f(x)/ (-x) > 0\).

	Take \(y > (-a) \vee b\).
	As \(\frac{1}{2} \frac{d}{d\m}\frac{d^+}{dx}\) is the generator of the stopped diffusion \(\X_{\cdot \wedge \t_y \wedge \t_{-y}}\) and \(f\) is in its domain (see \cite[Section~2.7]{freedman}), Dynkin's formula (\cite[Lemma 48, p. 119]{freedman}) yields
	\[
	\E_x \big[ f(\X_{t \wedge \t_y \wedge \t_{-y}})\big] = f (x) + \E_x \Big[ \int_0^{t \wedge \t_y \wedge \t_{-y}} g (\X_s) ds \Big] \leq f (x) + t \|g\|_\infty.
	\]
	Finally, letting \(y \to \infty\) and using Fatou's lemma yields that \(f(\X_t) \in L^1 (\P_x)\). As \(\lim_{x \to \infty} f(x)/x > 0\) and \(\lim_{x \to - \infty} f(x)/ (-x) > 0\), this implies \(\X_t \in L^1(\P_x)\) and the proof is complete.
\end{proof}

\bibliographystyle{plain}

\end{document}